\numberwithin{equation}{section}
\def\R{\mathbb{R}}
\def\S{\mathbb{S}}
\def\Rn{\mathbb{R}^n}
\def\eqref#1{{\normalfont(\ref{#1})}}
\newcommand{\snr}[1]{\mathcal{S}_{#1+}^n}
\newcommand{\psnr}[1]{\mathcal{P}(\mathcal{S}_{#1+}^n)}
\def\eqref#1{{\normalfont(\ref{#1})}}
\newtheorem{assump}{Assumption}[section]
\newtheorem{prop}{Proposition}[section]
\newtheorem{ques}{Question}[section]
\newtheorem{theorem}{Theorem}[section]
\newtheorem{cor}{Corollary}[section]
\newtheorem{remark}{Remark}[section]
\newtheorem{lemma}{Lemma}[section]
\newcommand{\textdef}[1]{\textit{#1}\index{#1}}
\newcommand{\range}{\mathrm{range}}
\newcommand{\LL}{{\mathcal L} }
\newcommand{\PP}{{\mathcal P} }
\newcommand{\Sn}{{\mathcal S^n}}
\newcommand{\Snp}{{\mathcal S^n_+\,}}
\newcommand{\Snrp}{{\S^n_{r+}\,}}
\newcommand{\Snrpc}{{\mathcal S^n_{r+}}}
\newcommand{\Snirpc}{{\mathcal S^{n_i}_{r+}}}
\newcommand{\A}{{\mathcal A}}
\newcommand{\II}{{\mathcal I\,}}
\newcommand{\bbm}{\begin{bmatrix}}
\newcommand{\ebm}{\end{bmatrix}}
\newcommand{\bem}{\begin{pmatrix}}
\newcommand{\eem}{\end{pmatrix}}
\newcommand{\beq}{\begin{linenomath*} \begin{equation}}
\newcommand{\beqs}{\begin{linenomath*} \begin{equation*}}
\newcommand{\eeq}{\end{equation} \end{linenomath*}}
\newcommand{\eeqs}{\end{equation*} \end{linenomath*}}
\newcommand{\beqr}{\begin{linenomath*} \begin{eqnarray}}
\newcommand{\beqrs}{\begin{linenomath*} \begin{eqnarray*}}
\newcommand{\eeqr}{\end{eqnarray} \end{linenomath*}}
\newcommand{\eeqrs}{\end{eqnarray*} \end{linenomath*}}
\newcommand{\bet}{\begin{table}}
\DeclareMathOperator{\trace}{{trace}}
\DeclareMathOperator{\codim}{{codim}}
\DeclareMathOperator{\rank}{{rank}}
\newcommand{\nc}{\newcommand}
\nc{\arrow}{{\rm arrow\,}}
\nc{\Arrow}{{\rm Arrow\,}}
\nc{\BoDiag}{{\rm B^0Diag\,}}
\nc{\bodiag}{{\rm b^0diag\,}}
\nc{\Mm}{{\mathcal M}^{m} }
\nc{\Mmn}{{\mathcal M}^{mn} }
\nc{\Mnr}{{\mathcal M}_{nr} }
\nc{\Mnmr}{{\mathcal M}_{(n-1)r} }
\nc{\kwqqp}{Q{$^2$}P\,}
\nc{\kwqqps}{Q{$^2$}Ps}
\nc{\notinaho}{(X,S)\in \overline{AHO}(\A)}
\nc{\inaho}{(X,S)\in AHO(\A)}
\newcommand{\bea}{\begin{eqnarray}}%
\newcommand{\eea}{\end{eqnarray}}%
\newcommand{\beas}{\begin{eqnarray*}}%
\newcommand{\eeas}{\end{eqnarray*}}%
\newcommand{\Rmn}{\R^{m \times n}}%
\renewcommand{\L}{\mathcal{L}}%
\newcommand{\Hnp}[1][]{\,\mathbb{H}_+^{\ifthenelse{\equal{#1}{}}{n}{#1}}}
\newcommand{\Hn}[1][]{\,\mathbb{H}^{\ifthenelse{\equal{#1}{}}{n}{#1}}}
\newcommand{\Dn}[1][]{\,\mathbb{D}^{\ifthenelse{\equal{#1}{}}{n}{#1}}}
\begin{document}
%\begin{multicols}{2}
%\setlength{\multicolsep}{30pt}

%\linenumbers
	\bibliographystyle{plain}
\title{
	Rank Restricted Semidefinite Matrices\\
	and\\
	Image Closedness\footnote{Research supported by
		The Natural Sciences and Engineering Research Council of Canada.}
%\footnote{Department
%of Combinatorics and Optimization,
%          Waterloo, Ontario N2L 3G1, Canada,
%         Research Report CORR 2010-01.}
%   \footnote{Research supported by Natural Sciences Engineering Research
%     Council Canada and a grant from AFOSR.}
}
             \author{
		     \href{https://www.linkedin.com/pub/ian-davidson/87/2bb/6a5}
{Ian Davidson}\thanks{
Department of Combinatorics and Optimization, University of Waterloo, Waterloo, Ontario, Canada N2L 3G1. 
	}
%\and
%\href{http://shimenghuang.com/} {Shimeng Huang}
%	\thanks{
%	Department of Combinatorics and Optimization, Faculty of Mathematics, University of Waterloo, Waterloo, Ontario, Canada N2L 3G1}
\and
\href{http://www.math.uwaterloo.ca/~hwolkowi/}
{Henry Wolkowicz}%
        \thanks{Department of Combinatorics and Optimization
Faculty of Mathematics, University of Waterloo, Waterloo, 
Ontario, Canada N2L 3G1, \url{www.math.uwaterloo.ca/\~hwolkowi}.
}
}
\date{\today}
          \maketitle
%\begin{center}
%          University of Waterloo\\
%          Department of Combinatorics and Optimization\\
%          Waterloo, Ontario N2L 3G1, Canada\\
%          Research Report CORR 2009-04
%\end{center}

%{\bf Key Words:}  

%\noindent {\bf AMS Subject Classification:}

\begin{abstract}
We study the closure of the projection of the (nonconvex) cone of
rank restricted positive semidefinite matrices onto subsets of 
the matrix entries. This defines the feasible sets for
semidefinite completion problems with restrictions on the
ranks. Applications include conditions
for low-rank completions using the nuclear norm heuristic.
\end{abstract}

%\linenumbers
{\bf Keywords:}
Positive semidefinite (PSD) matrix completion, closedness of
projections, low-rank matrix completions.

{\bf AMS subject classifications:} 
90C22, 90C46, 52A99

\tableofcontents
%\listoftables
%\listoffigures

%----------------------------------------------------
\section{Introduction}
%----------------------------------------------------

Consider an \textdef{undirected graph, $G=(V,E)$}, 
\index{$G=(V,E)$, undirected graph}
with \textdef{vertex set, $V=\{1,\ldots,n\}$}, 
\index{$V=\{1,\ldots,n\}$, vertex set}
and \textdef{index set, $E\subseteq \{ij:i\leq j\}$}.
\index{$E\subseteq \{ij:i\leq j\}$, index set} 
The classical positive semidefinite (PSD) completion problem begins with
a given \textdef{partial symmetric matrix} $X\in \Sn$, where
$X_{ij}=a_{ij}, \forall ij \in E$ and attempts to find the missing
entries from the data $a\in \R^E$ 
so that $X$ is PSD. One of the problems in \cite{DrPaWo:14} answers
the question of when the projection of the PSD cone $\Snp$ onto the
matrix entries indexed by $E$ is closed, i.e.,~when the set of
\textdef{coordinate shadows, $\PP(\Snp)$}, is closed.
In this paper we add an additional rank restriction and ask the
following.
\begin{ques} When is the projection of the (generally nonconvex)
cone of PSD matrices of rank at most $r$, $\PP(\Snrpc)$, closed?
\end{ques}
\index{PSD matrices of rank at most $r$, $\Snrpc$}
\index{$\Snrpc$, PSD matrices of rank at most $r$}
% We allow for self-loops in
%the graph and denote
%Consider an \textdef{undirected graph, $G$}
%\index{$G$, undirected graph}
%A classical question in convex analysis is finding the conditions under
%which the linear image of a closed convex set is closed.
Such questions arise for example in constraint qualifications for
guaranteeing strong duality. It is also closely related to the
closedness of the sum of sets. See
e.g.,~\cite{DrPaWo:14,BorweinMoorsB:10,Pataki:07,MR1404832}.
In addition, the projection $\PP(\Snrpc)$ is exactly 
the data set determining the feasibility of the rank restricted 
PSD completion problem, e.g.,~\cite{MR90g:15039,MR2002b:15002}.

The paper is organized as follows. We continue with the background and
some preliminary results in the remainder of this section. We then
show that we can restrict our attention to
the connected components of the graphs in Section \ref{sect:partG}.
Specific cases for closure and failure of closure are given in Section
\ref{sect:bipartG}. In particular we show the importance of bipartite
graphs. The concluding Section \ref{sect:concl} contains a summary of
the results and some open questions and conjectures.

\subsection{Background}
We work in the space of $n\times n, n\geq 2$, 
\textdef{real symmetric matrices, $\Sn$},
\index{$\Sn$, real symmetric matrices}
equipped with the \textdef{trace inner product} $\langle A,B \rangle = 
\trace AB, \forall A,B\in \Sn$. We denote the closed convex cone of 
\textdef{PSD matrices, $\Snp$}. We focus on the
\index{$\Snp$, PSD matrices}
generally nonconvex cone of PSD matrices of rank at most $r$, $\Snrpc$.
We allow for self-loops in the undirected graph, $G=(V,E)$, and denote
\textdef{$\LL = \{i:ii \in E\}$} with the complement \textdef{$\LL^c$}.
A partial symmetric matrix $X\in \Sn$ is is called
a \textdef{partial PSD matrix} if
all the principal submatrices formed by known entries 
are PSD. The \emph{PSD matrix completion problem with rank restriction 
at most $r$} can be stated as completing the partial PSD matrix $X$ to a
positive semidefinite matrix such that the rank of the 
completion is at most $r$. We assume that $1\leq r\leq n$.

Our work extends the following.
\begin{theorem}[{\cite{DrPaWo:14}}] 
	\label{thm:origmain}
$\PP(S_+^n)$ is closed if and only if $\L,\L^c$ are disconnected.
\qed
\end{theorem}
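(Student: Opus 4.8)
The plan is to reduce everything to the block structure of $G$ determined by the loop set $\L$. Write $E_\L$ for the edges of $G$ with both endpoints in $\L$ (this includes every loop, since $ii\in E\iff i\in\L$) and $E_{\L^c}$ for the edges with both endpoints in $\L^c$; saying $\L,\L^c$ are disconnected means precisely $E=E_\L\sqcup E_{\L^c}$. Assume this holds. Every principal submatrix of a PSD matrix is PSD, and any pair of shadows on the two index blocks is realized by a block-diagonal matrix, so $\PP(S_+^n)=\PP_{E_\L}(\mathcal S_+^{|\L|})\times\PP_{E_{\L^c}}(\mathcal S_+^{|\L^c|})$ and it suffices to show each factor is closed. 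In the $\L$-block every diagonal entry $ii$ is a specified coordinate; hence along any sequence $X^{(k)}\succeq 0$ with $\PP_{E_\L}(X^{(k)})$ convergent the diagonal entries converge, so are bounded, and then $|X^{(k)}_{ij}|\le (X^{(k)}_{ii}X^{(k)}_{jj})^{1/2}$ bounds the whole sequence; a convergent subsequence has a PSD limit realizing the limiting shadow — this is the standard compactness argument for shadows with a fully specified diagonal. In the $\L^c$-block no diagonal entry is specified, so given any target vector one forms the symmetric matrix carrying those (off-diagonal) values, adds a large multiple of the identity to make it PSD, and reads off exactly the target; thus $\PP_{E_{\L^c}}(\mathcal S_+^{|\L^c|})=\R^{E_{\L^c}}$, and the product of these two closed sets is closed.

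For the converse, suppose some edge $ij\in E$ has $i\in\L$ and $j\in\L^c$, so $ii,ij\in E$ but $jj\notin E$. Consider the PSD matrices $X^{(k)}=\tfrac1k e_ie_i^T+k\,e_je_j^T+e_ie_j^T+e_je_i^T$ (PSD because the only nonzero principal submatrix is $\sarr{1/k & 1\\ 1 & k}$, with nonnegative determinant). Then $\PP(X^{(k)})$ converges to the point $y$ with $y_{ij}=1$ and $y_e=0$ for every other $e\in E$; but any $X\succeq 0$ with $X_{ii}=0$ has a zero $i$th row and so cannot satisfy $X_{ij}=1$. Hence $y\in\cl\,\PP(S_+^n)\setminus\PP(S_+^n)$, and the shadow is not closed.

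I expect the ``if'' direction to carry the real content: one must justify both that $E$ splits as claimed and that block-diagonal combinations realize the product $\PP_{E_\L}(\cdot)\times\PP_{E_{\L^c}}(\cdot)$, and then the $\L$-block case is the usual compactness argument while the $\L^c$-block case uses the identity shift. The converse is the classical $2\times2$ PSD-gap example; the only care needed there is padding with zeros so that every coordinate of the limit point other than $ij$ remains consistent with a genuine PSD completion, which the displayed $X^{(k)}$ arranges automatically.
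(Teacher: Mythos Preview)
The paper does not actually prove this theorem: it is stated as a citation from \cite{DrPaWo:14} and closed with a \qed\ immediately, with no proof environment. Your argument is correct and, in fact, matches piece by piece the fragments the paper \emph{does} supply elsewhere. Your ``only if'' construction---the rank-one $2\times 2$ block $\sarr{1/k & 1\\1 & k}$ padded by zeros---is exactly the sequence used in the proof of Corollary~\ref{cor:discon}, where the paper reproduces this direction (in the rank-restricted setting) ``for completeness.'' Your ``if'' argument on the $\L$-block (diagonals converge, hence off-diagonals are bounded via $|X_{ij}|\le (X_{ii}X_{jj})^{1/2}$, then pass to a convergent subsequence and use closedness of $\mathcal S_+^n$) is the compactness argument the paper gives later for loop graphs, $|\L|=n$. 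The only ingredients you supply that the paper leaves entirely implicit are the product decomposition $\PP(\mathcal S_+^n)=\PP_{E_\L}(\mathcal S_+^{|\L|})\times\PP_{E_{\L^c}}(\mathcal S_+^{|\L^c|})$ and the identification of the $\L^c$-factor with all of $\R^{E_{\L^c}}$ via the identity shift; both are routine. In short: correct, and essentially the argument the paper would have given had it not deferred to the cited reference.
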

\subsection{Preliminary Results}
We first add the following related result.
\begin{theorem}[{\cite[Thm 1.1]{MR1797294},\cite{Gpat:95},\cite[Sect.
31.5]{MR98g:52001}}]
\label{thm:barv}
Let $\A\subset \Sn$ be an affine subspace such that the intersection
$\A\cap \Snp$ is non-empty and $\codim(\A) \leq \begin{pmatrix}r+2\cr 2
\end{pmatrix} -1$ for some non-negative integer $r$. Then there is a
matrix $X\in \A\cap \Snp$ such that $\rank(X)\leq r$.
\qed
\end{theorem}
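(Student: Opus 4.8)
The plan is a minimal-rank argument exploiting the facial structure of $\Snp$. First I would pick, among all matrices in the nonempty set $\A\cap\Snp$, one matrix $X$ of least rank $k$; this minimum is attained because the rank takes values in the finite set $\{0,1,\dots,n\}$. The goal is then to show $k\le r$, so suppose for contradiction $k\ge r+1$. Translating coordinates, write $\A=X+\A_0$ with $\A_0\subseteq\Sn$ a linear subspace of dimension $\binom{n+1}{2}-\codim(\A)$.

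Next I would bring in the smallest face of $\Snp$ through $X$. Factor $X=VV^T$ with $V\in\R^{n\times k}$ of full column rank. Then that face is $F=\{VSV^T:S\in\Skp\}$, its linear span $M=\{VSV^T:S\in\Sk\}$ has dimension $\binom{k+1}{2}$ (the map $S\mapsto VSV^T$ is injective, since $V^{T}V$ is invertible), and $X$ lies in the relative interior of $F$, corresponding to $S=I\succ 0$. The Grassmann dimension inequality in $\Sn$ then gives
\[
\dim(\A_0\cap M)\ \ge\ \dim\A_0+\dim M-\binom{n+1}{2}\ =\ \binom{k+1}{2}-\codim(\A)\ \ge\ \binom{k+1}{2}-\binom{r+2}{2}+1 .
\]
Since $k\ge r+1$ implies $\binom{k+1}{2}\ge\binom{r+2}{2}$, we get $\dim(\A_0\cap M)\ge 1$, so there is a nonzero $H\in\A_0\cap M$, say $H=VS_HV^{T}$ with $0\ne S_H\in\Sk$.

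Finally I would slide along the line through $X$ in the direction $H$. For $t\in\R$ we have $X+tH=V(I+tS_H)V^{T}\in\A$. Since $S_H$ is symmetric and nonzero, the set $\{t:I+tS_H\succeq 0\}$ is a closed interval containing $0$ with at least one finite endpoint $t^{\ast}$; there $I+t^{\ast}S_H$ is positive semidefinite and singular, so $X+t^{\ast}H=V(I+t^{\ast}S_H)V^{T}\in\A\cap\Snp$ has rank strictly less than $k$. This contradicts the minimality of $k$, forcing $k\le r$, as claimed.

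I expect the facial/dimension bookkeeping to be the delicate part: one must verify carefully that $\spanl F$ has dimension exactly $\binom{k+1}{2}$, that $X$ sits in its relative interior, and that the inequality $\dim(U\cap W)\ge\dim U+\dim W-\dim\Sn$ is applied in the correct ambient space. The closing ``walk to the boundary of the face'' is then routine: it uses only that a nonzero symmetric $S_H$ satisfies $\lambda_{\max}(S_H)>0$ or $\lambda_{\min}(S_H)<0$, so that $1+t\,\lambda_{\max}(S_H)$, respectively $1+t\,\lambda_{\min}(S_H)$, vanishes at a finite $t$ at which the perturbed matrix stays PSD while losing a unit of rank.
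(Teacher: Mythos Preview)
Your argument is correct and is precisely the classical Barvinok--Pataki minimal-rank proof: take a PSD point of least rank in $\A\cap\Snp$, use the face $F=\{VSV^T:S\in\Skp\}$ of $\Snp$ whose relative interior contains it, apply the Grassmann dimension inequality to force $\A_0\cap\spanl F\neq\{0\}$ when $k\ge r+1$, and walk along that line to a singular $I+t^\ast S_H$, contradicting minimality. The bookkeeping you flag (injectivity of $S\mapsto VSV^T$, so $\dim\spanl F=\binom{k+1}{2}$; the inequality applied in the ambient $\Sn$ of dimension $\binom{n+1}{2}$) is fine as stated.

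Note, however, that the paper does not supply its own proof of this theorem: it is quoted from the cited references and closed with a bare \qed. So there is no ``paper's proof'' to compare against; what you have written is essentially the argument found in those sources.
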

We now note the following two results that follow from the above two
theorems.
\begin{cor}
\label{cor:nm1}
Let 
\begin{equation}
\label{eq:tE}
t =  \left\lceil {-\frac 32 + 
               \frac {\sqrt {9 + 8|E|}}2} \right\rceil.
\end{equation}
Then $t\leq n-1$ and 
\[
\Big(  \psnr{r}  \text{ is closed for } r=t,t+1,\ldots,n  \Big)   
\iff
 \Big(\L \text{ and } \L^c \text{ are disconnected}\Big).
\]
\end{cor}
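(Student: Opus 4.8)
The plan is to show that the rank constraint is invisible to the coordinate projection as soon as $r\ge t$ — precisely, $\psnr r=\psnr n=\PP(\mathcal S^n_+)$ — and then to quote Theorem~\ref{thm:origmain}.

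First I would unwind the closed form \eqref{eq:tE}. Since $\binom{r+2}{2}-1=\tfrac{r(r+3)}{2}$, for a non-negative integer $r$,
\begin{align*}
|E|\le\binom{r+2}{2}-1
&\iff r^2+3r-2|E|\ge 0\\
&\iff r\ge\tfrac{-3+\sqrt{9+8|E|}}{2}\iff r\ge t,
\end{align*}
the final equivalence because $t$ is by definition the least integer not below the (unique nonnegative) root of $\rho\mapsto\rho^2+3\rho-2|E|$. Since the completion problem is genuine, at least one entry is unspecified, so $|E|\le\binom{n+1}{2}-1$; plugging this into the middle expression and using $9+8\big(\binom{n+1}{2}-1\big)=(2n+1)^2$ gives $\tfrac{-3+\sqrt{9+8|E|}}{2}\le n-1$, hence $t\le n-1$.

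The key step is: \emph{if $|E|\le\binom{r+2}{2}-1$ then $\psnr r=\PP(\mathcal S^n_+)$.} The inclusion ``$\subseteq$'' is immediate from $\mathcal S^n_{r+}\subseteq\mathcal S^n_+$. For ``$\supseteq$'', take $a\in\PP(\mathcal S^n_+)$ and set $\A_a=\{X\in\Sn:X_{ij}=a_{ij}\ \forall\,ij\in E\}$. That $a$ lies in the shadow says precisely $\A_a\cap\mathcal S^n_+\neq\emptyset$, and $\codim(\A_a)=|E|\le\binom{r+2}{2}-1$, so Theorem~\ref{thm:barv} supplies $X\in\A_a\cap\mathcal S^n_+$ with $\rank X\le r$; then $X\in\mathcal S^n_{r+}$ and $\PP(X)=a$, so $a\in\psnr r$. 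Applying this for each $r\in\{t,t+1,\dots,n\}$ (all of which satisfy $r\ge t$, and $\psnr n=\PP(\mathcal S^n_+)$ trivially) shows these sets are literally one and the same, so ``$\psnr r$ is closed for $r=t,\dots,n$'' is the single statement ``$\PP(\mathcal S^n_+)$ is closed'', which by Theorem~\ref{thm:origmain} is equivalent to $\L$ and $\L^c$ being disconnected.

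I do not expect a real obstacle. The only points needing care are that the hypotheses of Theorem~\ref{thm:barv} hold uniformly in $a$ — non-emptiness of $\A_a\cap\mathcal S^n_+$ is exactly $a\in\PP(\mathcal S^n_+)$, and $\codim(\A_a)=|E|$ is independent of $a$ — together with the elementary quadratic bookkeeping tying $\binom{r+2}{2}-1$, $|E|$, $t$ and $n$ together, plus the harmless treatment of the degenerate fully-specified graph.
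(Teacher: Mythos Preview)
Your proof is correct and follows essentially the same approach as the paper: invoke Theorem~\ref{thm:barv} to show that every point of $\PP(\mathcal S^n_+)$ already admits a rank-$\le t$ completion, so $\psnr{r}=\PP(\mathcal S^n_+)$ for all $r\ge t$, and then quote Theorem~\ref{thm:origmain}. The paper's version takes a small detour---handling $|\LL|=n$ separately and doing the $r=n-1$ case by a direct perturbation-to-the-boundary argument before appealing to Barvinok---but your observation that the shadows literally coincide as sets is cleaner and rests on the same two cited results.
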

\begin{proof}
Necessity follows from the Theorem \ref{thm:origmain} if we fix $r=n$.

For sufficiency first note that Theorem \ref{thm:origmain} implies
$\PP(\Snp)$ is closed. Moreover, the closure holds if $|\LL|=n$, since
any sequence of partial PSD matrices $a^i\to a$ with $\PP(X^i)=a^i, X^i\in
\psnr{n-1}$ means that the diagonal elements converge and so we can
assume that $X^i\to \bar X\succeq 0$. The rank result now follows from
lower semi-continuity of the rank function. Therefore, we can assume
that $\LL^c\neq \emptyset$.

Now suppose that $r=n-1$ and consider a sequence of
partial PSD matrices $a^i\to a$ and suppose that $\PP(X^i)=a^i, X^i\in
\psnr{n-1}$. Then there exists $\bar X\succeq 0, \PP(\bar X)=a$. If
$\rank(\bar X)\leq
n-1$ then we are done. If $\rank(\bar X)=n$, then we can consider the
positive semidefinite completion problem, (PSDC),
\index{positive semidefinite completion problem, PSDC}
\index{PSDC, positive semidefinite completion problem}
\begin{equation}
\label{eq:psdcrank}
(PSDC)\qquad 
\min \trace(CX) \text{ s.t. } X\succeq 0, \, X_{ij}=a_{ij}, \, \forall ij \in E.
\end{equation}
This program has a feasible solution $\bar X \succ 0$ that is not unique
since we have $\LL^c \neq \emptyset$. Therefore we can move in the
direction $\bar X + \alpha D, \alpha \in \R$, for some $0\neq D\in \Sn$.
This means that $\bar X + \alpha D \notin \Snp$, for some $\alpha \in
\R$, and on the line segment $[\bar X, \bar X+\alpha D]$
we can find a singular feasible point
$\bar X + \bar \alpha D\succeq 0$, for some $\bar \alpha \in\R$. The
closure follows since feasibility means
$\PP(\bar X + \bar \alpha D)=a$.

The key to the sufficiency proof above was in finding a feasible
SDP completion with the lower rank. We do this by applying Theorem
\ref{thm:barv}. The codimension for a completion problem is exactly
$|E|$, the number of constraints or elements that are fixed.
Therefore, we have $|E| \leq \frac {(t+2)(t+1)}2-1$ which is equivalent
to $2|E|+2 \leq t^2+3t+2$. The only non-negative root for this quadratic
yields the smallest nonnegative integer
\[
t= \left\lceil {-\frac 32 + \frac {\sqrt {9 + 8|E|}}2} \right\rceil.
\]
We can combine this with the result for $n-1$ and obtain a feasible solution 
$X$, a completion, with rank at most $t$, i.e.,
\[
X\in \psnr{t}, \quad \PP(X)=a.
\]
\end{proof}
\begin{cor}
\label{cor:trankbnd}
Suppose that $|E|< \frac 12 (t^2+3t) \,\left(=\begin{pmatrix}t+2\cr
2\end{pmatrix}-1\right),\, t<n$. Then
\[
\Big(  \psnr{r}  \text{ is closed for } r=t,t+1,\ldots,n  \Big)   
\iff
 \Big(\L \text{ and } \L^c \text{ are disconnected}\Big).
\]
\end{cor}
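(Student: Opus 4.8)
The plan is to read this off from Corollary \ref{cor:nm1}, or equivalently to rerun the proof of that corollary with the crude edge count $|E|<\binom{t+2}{2}-1$ in place of the ceiling formula. First note that $\binom{s+2}{2}-1$ is strictly increasing in $s\ge0$, and (as recorded in the proof of Corollary \ref{cor:nm1}) the number $\lceil-\tfrac32+\tfrac12\sqrt{9+8|E|}\,\rceil$ is exactly the least nonnegative integer $s$ with $|E|\le\binom{s+2}{2}-1$. Hence the hypothesis $|E|<\binom{t+2}{2}-1$ forces $t\ge\lceil-\tfrac32+\tfrac12\sqrt{9+8|E|}\,\rceil$; in particular $t\ge1$, so every index in $\{t,t+1,\ldots,n\}$ is an admissible rank, and $t\le n-1$. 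Consequently $\{t,\ldots,n\}$ lies inside the index range of Corollary \ref{cor:nm1}, and the stated equivalence transfers directly. For completeness I would also give the short self-contained derivation from Theorems \ref{thm:origmain} and \ref{thm:barv}.

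For the implication ``closed $\Rightarrow$ disconnected'', I would specialize to $r=n$: the restriction $\rank\le n$ is vacuous, so $\psnr{n}=\PP(\Snp)$, and its closedness forces $\L,\L^c$ disconnected by Theorem \ref{thm:origmain} (here $t<n$ guarantees $n$ lies in the range). For the converse, assume $\L,\L^c$ disconnected, so $\PP(\Snp)$ is closed by Theorem \ref{thm:origmain}. Fix any $r$ with $t\le r\le n$ and a sequence $a^i=\PP(X^i)\to a$ with $X^i\in\Snrpc$. Since $\Snrpc\subseteq\Snp$ we get $a^i\in\PP(\Snp)$, and closedness gives $\bar X\succeq0$ with $\PP(\bar X)=a$. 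Set $\A=\{Y\in\Sn:Y_{ij}=a_{ij}\ \forall\,ij\in E\}$, an affine subspace with $\A\cap\Snp\ni\bar X$ and $\codim(\A)=|E|$. Because $r\ge t$,
\[
\codim(\A)=|E|<\binom{t+2}{2}-1\le\binom{r+2}{2}-1,
\]
so Theorem \ref{thm:barv} (applied with the nonnegative integer $r$) produces $X\in\A\cap\Snp$ with $\rank(X)\le r$; then $X\in\Snrpc$ and $\PP(X)=a$, i.e.\ $a\in\psnr{r}$. Hence $\psnr{r}$ is closed for every $r\in\{t,\ldots,n\}$.

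I do not expect a genuine obstacle here: the entire content rests on the two cited theorems together with the quadratic bound linking $|E|$ to $\binom{r+2}{2}$. The two points worth double-checking are (a) the displayed chain of inequalities — that $r\ge t$ together with the strict edge bound comfortably yields $|E|\le\binom{r+2}{2}-1$, which is precisely the codimension hypothesis of Theorem \ref{thm:barv}; and (b) that, unlike the write-up of Corollary \ref{cor:nm1}, no separate handling of the loop set (the subcases $|\L|=n$ and $\L^c\ne\emptyset$) is required: once disconnectedness hands us closedness of $\PP(\Snp)$ we already have a genuine PSD completion $\bar X$, and Theorem \ref{thm:barv} trims it to rank at most $r$ purely by counting the $|E|$ fixed entries.
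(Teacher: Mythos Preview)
Your proposal is correct. The paper's proof is the one-liner ``We just square both sides in \eqref{eq:tE}'', which is exactly your first paragraph: the hypothesis $|E|<\binom{t+2}{2}-1$ is the squared-out form of the ceiling bound, so the given $t$ dominates the value from Corollary~\ref{cor:nm1} and the equivalence carries over verbatim. Your additional self-contained argument is a mild streamlining worth noting: once Theorem~\ref{thm:origmain} hands you closedness of $\PP(\Snp)$ and hence some PSD completion $\bar X$, Theorem~\ref{thm:barv} applied with $\codim(\A)=|E|\le\binom{r+2}{2}-1$ directly produces a rank-$\le r$ completion, so the case split $|\L|=n$ versus $\L^c\neq\emptyset$ and the intermediate rank-$(n-1)$ perturbation step carried out in the proof of Corollary~\ref{cor:nm1} are not needed here.
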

\begin{proof}
We just square both sides in \eqref{eq:tE}.
\end{proof}
\begin{cor}
\label{cor:discon}
Let $\LL$ and $\LL^c$ be connected. Then $\psnr{r}$ is not closed.
\end{cor}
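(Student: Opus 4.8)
The plan is to exhibit, for every $r$ with $1\le r\le n$ at once, an explicit sequence lying in $\psnr{r}$ whose limit is not even the coordinate shadow of any PSD matrix, and hence in particular not in $\psnr{r}$. The first step is to unpack the hypothesis: since $\LL$ and $\LL^c$ are connected, some connected component of $G$ meets both of them, so there is an edge $kl\in E$ with $k\in\LL$ and $l\in\LL^c$; equivalently $kk\in E$ while $ll\notin E$. This is the only place the hypothesis is used, and it also quietly forces $\LL^c\neq\emptyset$.

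Next, for each positive integer $i$ I would let $v^i\in\Rn$ have $k$-th entry $1/i$, $l$-th entry $i$, and all other entries $0$, and set $X^i=v^i(v^i)^T$. Since $\rank X^i\le 1\le r$, we have $X^i\in\Snrpc$ and so $a^i:=\PP(X^i)\in\psnr{r}$. I would then read off the coordinates of $a^i$ indexed by $E$: the $(k,k)$ coordinate equals $1/i^2$, the $(k,l)$ coordinate equals $1$, and every remaining coordinate of $a^i$ is $0$, because any edge of $E$ other than $kk$ and $kl$ has an endpoint outside $\{k,l\}$ (here it matters that $ll\notin E$, so the unbounded index $l$ is never a specified diagonal slot). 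Hence $a^i\to a$, where $a\in\R^E$ has $a_{kl}=1$ and all other coordinates equal to $0$.

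The final step is to check that $a\notin\PP(\Snp)$. If some $Y\succeq 0$ had $\PP(Y)=a$, then $Y_{kk}=a_{kk}=0$; but a PSD matrix with a zero diagonal entry has the corresponding row and column identically zero, so $Y_{kl}=0\neq 1=a_{kl}$, a contradiction. Since $\Snrpc\subseteq\Snp$ implies $\psnr{r}\subseteq\PP(\Snp)$, this yields $a\notin\psnr{r}$. Thus $\psnr{r}$ contains a convergent sequence whose limit is not in $\psnr{r}$, i.e.\ $\psnr{r}$ is not closed.

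I do not expect any step to be a real obstacle; the only points requiring care are confirming that the hypothesis genuinely supplies a crossing edge $kl$ with $kk\in E$ and $ll\notin E$, and the bookkeeping that every $E$-coordinate other than $(k,l)$ vanishes in the limit, which is exactly why $v^i$ is taken to be supported on $\{k,l\}$. Any scaling with $v^i_k\to 0$ and $v^i_k v^i_l$ bounded away from $0$ (forcing $v^i_l\to\infty$ on the free slot $l$) works equally well, and for $r=n$ the argument recovers the ``only if'' direction of Theorem~\ref{thm:origmain}.
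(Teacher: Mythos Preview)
Your proof is correct and follows essentially the same approach as the paper's own argument: both exploit the hypothesis to find a crossing edge $kl$ with $kk\in E$, $ll\notin E$, build a rank-one sequence supported on $\{k,l\}$ whose $(k,k)$ entry tends to $0$ while the $(k,l)$ entry stays at $1$, and observe that the limiting partial matrix admits no PSD completion (hence no rank-$r$ PSD completion). Your write-up is slightly more explicit in justifying why the unspecified diagonal slot $ll$ absorbs the divergence and why the limit is not PSD completable, but the idea is identical.
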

\begin{proof}
The proof is similar to the general case in \cite{DrPaWo:14}. 
We include it for completeness.
Without loss of generality, we can assume that $1\in \LL$, $2\in \LL^c$ and 
$12\in E$. Taking a sequence of partial matrices $a^i$ with 
$a^i_{11} = \frac{1}{i}$, $a^i_{12} = 1$ and all other entries of $a^i = 0$. 
Then we have a sequence of matrices and images
\beqs 
X^i = \begin{bmatrix}
\frac{1}{i} & 1 & 0 & \ldots\\
1 & ? & 0 & \ldots\\
0 & 0 & 0 & \ldots\\
\vdots & \vdots & \vdots & \ddots\\
\end{bmatrix} \in \Snrpc, \quad
	\PP(X^i)=a^i.
\eeqs
This sequence of matrices is always rank one completable with
$X_{22}=i$. (And thus is rank at most $r$ completable.)
Therefore $a^i \in \psnr{r}$. But $a^i \underset{i \rightarrow
\infty}\longrightarrow \bar{a}$ with $\bar{a}_{11} = 0$. 
This is not PSD completable.
\end{proof}

Following Corollaries \ref{cor:nm1} and \ref{cor:discon} we can add the 
following.

\begin{assump}
	\label{assump:disconn}
In the remainder of this paper we assume that
$\LL$ and $\LL^c$ are disconnected in the undirected graph $G=(V,E)$ and
\[1\leq r
 <  \left\lceil {-\frac 32 + 
               \frac {\sqrt {9 + 8|E|}}2} \right\rceil
\, \left(\leq (n-1) \right).
\]
\end{assump}
\index{connected components in $G$}
\index{disconnected components in $G$}

\section{Partitioned Graphs}
\label{sect:partG}
\index{partitioned graph}
We now get a rather nice result for closure that allows us to assume, without
loss of generality, that our graphs are composed of 
\emph{two connected components}. As an illustration, 
we first show the following.
\begin{prop}
	\label{prop:k2}
Suppose $X=\begin{bmatrix} A & ?\\ ? & B\\ \end{bmatrix}$ is a partial
PSD matrix, i.e.,~both $A$ and $B$ are PSD matrices.
Then the minimum rank PSD completion of $X$, denoted $\bar X$, has 
	\[
\rank(\bar X)= \max\{\rank(A),\rank(B)\}.
	\]
Moreover, the maximum rank PSD completion has rank given 
by the sum, $\rank(A)+\rank(B)$.
\end{prop}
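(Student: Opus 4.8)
The plan is to handle the two assertions (minimum rank and maximum rank) separately, in each case exhibiting an explicit completion achieving the claimed rank and then proving a matching lower bound.

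For the \emph{maximum rank} completion I would first observe the easy upper bound: any PSD completion $\bar X = \begin{bmatrix} A & C \\ C^T & B\end{bmatrix}$ satisfies $\rank(\bar X) \le \rank\begin{bmatrix} A & C\end{bmatrix} + \rank\begin{bmatrix} C^T & B\end{bmatrix}$, and since the first block row lies in the span of rows of $\begin{bmatrix} A & C\end{bmatrix}$ and PSD-ness forces $\range(C) \subseteq \range(A)$ (and $\range(C^T)\subseteq\range(B)$), this gives $\rank(\bar X)\le \rank(A)+\rank(B)$. For attainment I would take the block-diagonal completion $C = 0$, i.e.~$\bar X = A \oplus B$, which is PSD and has rank exactly $\rank(A)+\rank(B)$. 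So the maximum is exactly the sum.

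For the \emph{minimum rank} completion, the lower bound is immediate since $A$ and $B$ are principal submatrices of any completion, hence $\rank(\bar X) \ge \max\{\rank(A),\rank(B)\}$. The substantive part is constructing a completion of rank exactly $m := \max\{\rank(A),\rank(B)\}$. I would write $A = U U^T$ and $B = V V^T$ with $U$ having $\rank(A)$ columns and $V$ having $\rank(B)$ columns; pad whichever has fewer columns with zero columns so that both $U$ and $V$ have exactly $m$ columns (this does not change $UU^T$ or $VV^T$). Then set $\bar X = \begin{bmatrix} U \\ V\end{bmatrix}\begin{bmatrix} U^T & V^T\end{bmatrix} = \begin{bmatrix} UU^T & UV^T \\ VU^T & VV^T\end{bmatrix}$, which is PSD, has the correct diagonal blocks $A$ and $B$, and has rank at most $m$; combined with the lower bound it has rank exactly $m$.

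The main obstacle, such as it is, is making the padding step rigorous and being careful that a Gram/Cholesky-type factorization of a PSD matrix with exactly $\rank$ columns is available and that stacking the factors does not accidentally drop the rank below $m$ — but since the row space of $\begin{bmatrix} U^T & V^T\end{bmatrix}$ restricted to, say, the $U$-part already has dimension $\rank(A)$ and similarly $\rank(B)$ for the $V$-part, the rank of the stacked factor is exactly $m$, so no rank is lost. Everything else is a routine verification, and the two bounds pinch the answer in each case.
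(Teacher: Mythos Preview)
Your proof is correct and follows the same overall strategy as the paper: build a Gram-type completion by stacking factorizations of $A$ and $B$ for the minimum-rank part, and take the block-diagonal completion for the maximum-rank part. However, your execution is actually more careful than the paper's. The paper stacks the \emph{PSD square roots} $A^{1/2},B^{1/2}$, which (i) makes no dimensional sense when $A$ and $B$ have different sizes, and (ii) even when the sizes agree, need not achieve the minimum rank: with $A=\diag(1,0)$ and $B=\diag(0,1)$ the stacked square-root construction yields a rank-$2$ completion, not rank~$1$. Your choice of \emph{full-rank} factorizations padded by zero columns fixes both issues, and your explicit principal-submatrix lower bound and range-inclusion upper bound (which the paper omits) make the pinching argument complete. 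In short, same idea, but your version is the one that actually proves the proposition.
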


\begin{proof}
We use the unique PSD square roots of $A,B$ and get the completion with
the correct rank
\[
	\bar X= 
	\begin{bmatrix}
		A^{1/2} \\ B^{1/2}
	        \end{bmatrix}
	\begin{bmatrix}
		A^{1/2} \\ B^{1/2}
	        \end{bmatrix}^T\succeq 0,
\]
i.e.,we get $\rank(\bar X)= \rank \left(
	\begin{bmatrix}
		A^{1/2} \\ B^{1/2}
	\end{bmatrix}\right)$.

The maximum rank completion is obtained by using zeros in the
off-diagonal blocks.
\end{proof}

\begin{theorem}
\label{thm:partitclose}
Let $\{H_i\}_{i=1}^k$ be a partition of $V$,  
\[
H_1, \ldots, H_k \subseteq V, \, \cup_{i=1}^k H_i=V, \,
H_i \cap H_j =\emptyset, \forall i\neq j, \quad
n_i:= |H_i|, \,i=1,\ldots,k.
\]
Then the projection $\PP (\Snrpc)$ is closed if, and only if, 
the restricted projections to each component
$\PP_{H_i}(\Snirpc)$ is closed for all $i=1,\ldots,k$.
\end{theorem}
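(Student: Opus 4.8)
The plan is to reduce the theorem to an elementary fact about Cartesian products. The engine is the observation that, in the setting of this section, the partition separates the graph: no edge of $E$ joins two distinct parts $H_i$ (equivalently, each $H_i$ is a union of connected components of $G$), and this is exactly what makes the restricted projection $\PP_{H_i}$ meaningful. Writing $E_i$ for the set of edges of $E$ lying inside $H_i$, we then have the identification $\R^E \cong \R^{E_1}\times\cdots\times\R^{E_k}$, and I claim the stronger fact
\[
\PP(\Snrpc) \;=\; \prod_{i=1}^k \PP_{H_i}(\Snirpc).
\]
Granting this, the theorem follows: each factor contains the image of the zero matrix, so all factors are nonempty, and a finite product of nonempty sets is closed in the product space if and only if every factor is closed.

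The substantive step is the displayed product identity, which I would prove by two inclusions. For ``$\subseteq$'', let $X\in\Snrpc$ with $\PP(X)=a$; the principal submatrix $X_i:=X[H_i,H_i]$ equals $P_i^T X P_i$ for the selection matrix $P_i$ whose columns are $\{e_j : j\in H_i\}$, hence $X_i\succeq 0$ and $\rank(X_i)\le\rank(X)\le r$, so $X_i\in\Snirpc$; and since $E$ has no cross edges, $\PP(X)$ depends only on the diagonal blocks, so $\PP_{H_i}(X_i)$ is precisely the restriction $a^{(i)}$ of $a$ to $E_i$. For ``$\supseteq$'', take $A_i\in\Snirpc$ with $\PP_{H_i}(A_i)=a^{(i)}$ for each $i$; write a rank factorization $A_i=U_iU_i^T$ and pad $U_i$ with zero columns so that $U_i\in\R^{n_i\times r}$; then $U:=[\,U_1^T\;\cdots\;U_k^T\,]^T\in\R^{n\times r}$ and $X:=UU^T$ satisfy $X\succeq 0$, $\rank(X)\le r$, and diagonal blocks $U_iU_i^T=A_i$, so $X\in\Snrpc$ and $\PP(X)=a$ (the off-diagonal blocks $U_iU_j^T$ are unconstrained, as no edge of $E$ touches them). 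This $X$ is the $k$-block analogue of the square-root completion used in Proposition \ref{prop:k2}; it is the point where one checks that the common rank bound $r$ is preserved exactly, zero-padding of the factorizations being what ties the per-block bound to the global one.

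It then remains to record the product-of-sets fact used in both directions. The ``if'' direction is the standard statement that a finite product of closed sets is closed. For ``only if'', assume $\PP(\Snrpc)$ closed, fix an index $j$, and choose base points $\bar A_i\in\PP_{H_i}(\Snirpc)$ for $i\ne j$ (say the zero matrix in each); then $\PP_{H_j}(\Snirpc)$ is the preimage of the closed set $\PP(\Snrpc)$ under the continuous map $\R^{E_j}\to\R^E$ that fills the remaining coordinates with the data of the $\bar A_i$, hence closed. (Concretely: pad any convergent sequence from the $j$-th factor with these fixed points; its limit lies in $\PP(\Snrpc)$, so its $j$-th block lies in $\PP_{H_j}(\Snirpc)$.) All sets live in a finite-dimensional real space, so sequential closedness suffices throughout.

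I do not expect a serious obstacle: the whole content is that the rank-restricted completion problem decouples across the blocks, because every missing off-diagonal block is entirely unconstrained, and the decoupling is \emph{exact} at the level of the nonconvex cone $\Snrpc$ thanks to the stacking construction. The two points needing a little care are (i) making the ``no edge between distinct parts'' hypothesis explicit, since the equivalence fails without it (a single cross edge together with a diagonal entry already breaks it), and (ii) verifying that the assembled matrix $UU^T$ still has rank at most the same $r$, which the zero-padding handles.
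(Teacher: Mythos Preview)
Your proof is correct and follows essentially the same route as the paper: necessity via fixing all but one block to zero, sufficiency via stacking factorizations of the block completions (the paper uses the square roots $X_i^{1/2}$ where you zero-pad $U_i$ to width $r$, which is equivalent and makes the rank bound slightly more transparent). Your packaging via the product identity $\PP(\Snrpc)=\prod_{i}\PP_{H_i}(\Snirpc)$ is a clean strengthening of what the paper states, and you are right to make explicit the hypothesis that no edge of $E$ joins distinct parts $H_i$; the paper's proof uses this tacitly (and the intended application, Corollary~\ref{cor:connected}, is to connected components, where it holds automatically).
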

\begin{proof}
Necessity follows by considering the case of setting all the elements 
in all the components but one to zeros.

For sufficiency we consider the sequence with convergent projections
\[X^j= \begin{bmatrix}
	X_{11}^j & \ldots  & X_{1k}^j \\
	\vdots & \ddots & \vdots \\
	(X_{1k}^j)^T & \ldots & X_{kk}^j
	\end{bmatrix}  \in \Snrpc, \qquad x^j=\PP(X^j) \to x \in \R^E,
\quad j=1,2,\ldots.
\]
We need to find $X\in \Snrpc$ with $x=\PP(X)$.
%%From the closure assumption for each component we can apply
%%Corollary \ref{cor:connectedsubgr}; and after a 
%%permutation of the nodes if need, we can assume that the
%%first $k_l$ components are looped while the remaining are loopless, 
%%\[
%%|\L_i|=n_i,\, i=1,\ldots,k_l, \quad |\L_i|=\emptyset, \,i=k_l+1,\ldots,k.
%%\]
Denote the restricted projections
\[
	x_i^j:= \PP_{H_i}(X_{ii}^j)\to x_i, \, i=1,\ldots k.
\]
From the closure condition, we can now conclude that there exist
$X_i\in \Snirpc$ with $\PP_{H_i}(X_{i}) =x_i, \forall i$.
We can now obtain the desired completion with appropriate rank by using
\[
	X= 
	\begin{bmatrix}
		X_1^{1/2}   \\
		X_2^{1/2}   \\
		\ldots        \\
		X_k^{1/2}
	\end{bmatrix} 
	\begin{bmatrix}
		X_1^{1/2}   \\
		X_2^{1/2}   \\
		\ldots        \\
		X_k^{1/2}
	\end{bmatrix}^T,
\]
i.e.,~we apply the idea from Proposition \ref{prop:k2}.
\end{proof}

\begin{cor}
\label{cor:connected}
The projection $\PP(\Snrp)$ is closed if, and only if, the restricted
projections $\PP_{H_i}(\Snirpc)$ are closed for all
\underline{connected} components
$H_i$ of $G$, $n_i=|H_i|$.
\end{cor}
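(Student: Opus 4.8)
The plan is to invoke Theorem~\ref{thm:partitclose} with the single most convenient partition of $V$, namely its decomposition $V=\cup_{i=1}^k H_i$ into the connected components of $G$. The first step is to check that this genuinely is a partition of $V$ and, more importantly, that it is compatible with $E$: by the definition of a connected component, no edge of $E$ (self-loops included) joins two distinct $H_i$. Hence $E$ is the disjoint union of the internal edge sets $E_i:=\{pq\in E: p,q\in H_i\}$, the coordinate space splits as $\R^E=\prod_{i=1}^k\R^{E_i}$, and the restricted projection $\PP_{H_i}$ records precisely the coordinates indexed by $E_i$. With this observation in hand, Theorem~\ref{thm:partitclose} applies verbatim and yields that $\PP(\Snrp)$ is closed if, and only if, $\PP_{H_i}(\Snirpc)$ is closed for every $i=1,\dots,k$; the easy necessity half is again the zeroing-out argument already used in that theorem.

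The only point I would spell out is the treatment of trivial components. If $H_i$ is a single vertex carrying no self-loop, then $E_i=\emptyset$ and $\PP_{H_i}(\Snirpc)$ is a projection onto the zero-dimensional coordinate space, hence trivially closed; such components impose no restriction, so retaining them inside the quantifier ``for all connected components'' does no harm. Equivalently, one could first discard every vertex not incident to any edge of $E$ and only then apply the theorem; I would mention whichever reading the surrounding text prefers, but nothing hinges on it.

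I do not expect any genuine obstacle here. All of the substance is already carried by Theorem~\ref{thm:partitclose} (and, implicitly, by the rank-preserving block construction behind it), and the corollary is simply that theorem instantiated at the connected-component partition, together with the trivial remark above. If a subtlety were to arise it would be purely notational — making sure that ``$\PP$'' on the full graph really does agree with the product of the $\PP_{H_i}$ once the absence of cross-component edges is noted — and that is exactly what the first paragraph pins down.
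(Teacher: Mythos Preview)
Your proposal is correct and follows essentially the same route as the paper: both simply instantiate Theorem~\ref{thm:partitclose} at the partition of $V$ into the connected components of $G$, with your write-up being more explicit about the absence of cross-component edges and the harmless role of trivial components. The paper's two-line proof additionally invokes Assumption~\ref{assump:disconn}, but that reference is not doing any real work in the argument and your omission of it is fine.
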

\begin{proof}
From Theorem \ref{thm:partitclose} we can restrict to components.
From Assumption \ref{assump:disconn} we can restrict to connected
components.
\end{proof}
We can now focus on the connected components of a graph; equivalently, 
we can deal with each component separately and so
assume we are dealing with a connected graph.
\begin{assump}
\label{assump:connected}
Assume that Assumption \ref{assump:disconn} holds and that
the graph $G$ is connected with
\[
|\L|=n, \text{   or   }   |\L|=0.
\]
\end{assump}

\subsection{Closure for Loop Graphs, $|\L|=n$}
This result follows from a similar proof to the main result in
\cite{DrPaWo:14} or as a corollary to Theorem \ref{thm:origmain}.
\begin{theorem}
Let $|\LL| =  n$. Then $\PP(\Snrpc)$ is closed.
\end{theorem}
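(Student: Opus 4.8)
The statement to prove is: if $|\LL| = n$ (all self-loops present, i.e., all diagonal entries are specified by the data), then $\PP(\Snrpc)$ is closed. The plan is to mimic the argument already used in the proof of Corollary~\ref{cor:nm1} for the case $|\LL| = n$, but now track the rank restriction carefully. First I would take an arbitrary sequence $a^i \to a$ in $\R^E$ with $a^i \in \PP(\Snrpc)$, so there exist matrices $X^i \in \Snrpc$ with $\PP(X^i) = a^i$. The point of $|\LL| = n$ is that \emph{every} diagonal entry $X^i_{\ell\ell}$ is one of the specified coordinates, hence converges; and for a PSD matrix $|X^i_{pq}| \le \sqrt{X^i_{pp} X^i_{qq}}$, so the entire sequence $(X^i)$ is bounded in $\Sn$.

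\textbf{Key steps.} Second, by Bolzano--Weierstrass I would pass to a convergent subsequence $X^i \to \bar X$. Since $\Snpc$ is closed we get $\bar X \succeq 0$, and since $\PP$ is continuous we get $\PP(\bar X) = \lim \PP(X^i) = \lim a^i = a$. Third, and this is the only place the rank enters: each $X^i$ has $\rank(X^i) \le r$, and I need $\rank(\bar X) \le r$. This follows because the set $\Snrpc = \{X \succeq 0 : \rank X \le r\}$ is itself closed --- equivalently, the rank function is lower semicontinuous, so $\rank(\bar X) \le \liminf_i \rank(X^i) \le r$. (Concretely: if $\rank \bar X \ge r+1$ then some $(r{+}1)\times(r{+}1)$ minor of $\bar X$ is nonzero, hence nonzero for all large $i$ by continuity of the determinant, contradicting $\rank X^i \le r$.) Therefore $\bar X \in \Snrpc$ with $\PP(\bar X) = a$, so $a \in \PP(\Snrpc)$ and the set is closed.

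\textbf{Main obstacle.} There is essentially no obstacle here: unlike the general case, when $|\LL| = n$ we do not need to \emph{complete} anything or invoke Theorem~\ref{thm:barv} to reduce the rank, because the boundedness of the sequence is automatic from the specified diagonal, and then closedness of $\Snrpc$ does all the work. The only point requiring a sentence of care is the lower semicontinuity of rank, i.e., that $\Snrpc$ (though nonconvex) is closed; I would state this explicitly since the nonconvexity is the recurring subtlety of the paper. One could alternatively phrase the whole thing as a corollary to Theorem~\ref{thm:origmain} combined with the rank-semicontinuity observation, exactly as the surrounding text suggests, but the direct compactness argument above is self-contained and just as short.
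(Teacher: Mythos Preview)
Your proposal is correct and follows essentially the same argument as the paper's proof: the specified diagonal entries converge, hence (via the PSD inequality $|X^i_{pq}|\le \sqrt{X^i_{pp}X^i_{qq}}$) the full sequence $(X^i)$ is bounded, so a subsequence converges in $\Snp$, and lower semicontinuity of rank gives $\bar X\in\Snrpc$ with $\PP(\bar X)=a$. The paper's proof is terser but identical in structure.
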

\begin{proof}
Suppose we have a sequence of matrices, $\{X^j\} \subset \Snrpc$ with
$\PP(X^j) = x^j \to x$. Then the diagonal elements of $X^j$ converge and
therefore the off-diagonal elements are bounded. Therefore, without loss
of generality $X^j\to X$. The result now follows from the closure of
$\Snp$ and the lower semi-continuity of rank.
\end{proof}

\subsection{Examples of Failure for Loopless Graphs, $|\LL| = 0$}
We note that the following follows from the above results.
We include a proof since it emphasizes the elementary nature for $r=n$
and the difficulty that might arise for $r<n$.
\begin{theorem}
Let $r \in \{0,n\}, \, |\LL| =  0$. Then $\PP(\Snrpc)$ is closed.
\end{theorem}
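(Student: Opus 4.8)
The statement covers two trivial extremes of the rank parameter, $r=0$ and $r=n$, for a loopless connected graph. The plan is to dispatch each case separately and note that both reduce to elementary observations. For $r=n$ there is no effective rank restriction: $\Snrpc[n]=\Snpc$, so $\PP(\Snrpc)=\PP(\Snpc)$, and under Assumption~\ref{assump:connected} (in the loopless case $\LL=\LL^c\cap V$ is not the relevant split — but we have $|\LL|=0$ so $\LL$ and $\LL^c$ are trivially disconnected) Theorem~\ref{thm:origmain} gives closedness directly. Alternatively one can cite Corollary~\ref{cor:connected}. So the $r=n$ half is immediate.

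For $r=0$ the only matrix in $\Snrpc[0]$ is the zero matrix, hence $\PP(\Snrpc[0])=\{0\}\subseteq\R^E$, a single point, which is trivially closed. I would write this in one line.

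The only mildly substantive point — and the reason the authors flag this theorem as emphasizing "the difficulty that might arise for $r<n$" — is to make explicit that the argument for $r=n$ genuinely uses $r=n$: the boundedness-from-diagonal trick is unavailable when $|\LL|=0$, and the reduction $\Snrpc=\Snpc$ fails for $r<n$. So in the write-up I would, after giving the two short proofs, add a sentence contrasting with the general $r<n$ situation, to motivate the next sections. I do not expect any real obstacle here; the "hard part" is purely expository, namely being careful that the quoted Theorem~\ref{thm:origmain} applies under $|\LL|=0$ (where the partition $\LL,\LL^c$ is $\emptyset, V$, which are vacuously disconnected), and flagging honestly that nothing in this proof extends to intermediate ranks.

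\begin{proof}
If $r=n$, then the rank restriction is vacuous, so $\Snrpc=\Snpc$ and $\PP(\Snrpc)=\PP(\Snpc)$. Since $|\LL|=0$, the sets $\LL=\emptyset$ and $\LL^c=V$ are disconnected, so Theorem~\ref{thm:origmain} gives that $\PP(\Snpc)$ is closed.

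If $r=0$, then $\Snrpc=\{0\}$ and hence $\PP(\Snrpc)=\{0\}\subseteq\R^E$, which is closed.

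Note that, in contrast to the loop case $|\LL|=n$, the argument for $r=n$ here does not proceed by bounding a convergent sequence via convergent diagonal entries; rather it relies on $\Snrpc=\Snpc$ together with Theorem~\ref{thm:origmain}. Neither device is available when $1\leq r<n$ and $|\LL|=0$, which is precisely the situation studied in the sequel.
\end{proof}
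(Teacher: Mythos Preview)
Your proof is correct, but for $r=n$ the paper takes a more direct route than citing Theorem~\ref{thm:origmain}. The paper simply observes that when $|\LL|=0$ every $a\in\R^E$ already lies in $\PP(\Snpc)$: set the unspecified off-diagonal entries to zero and choose the (all free) diagonal entries large enough to force positive definiteness by diagonal dominance. Hence $\PP(\Snpc)=\R^E$, which is trivially closed. Your appeal to Theorem~\ref{thm:origmain} is a legitimate shortcut, but the constructive argument is what the authors intend by ``the elementary nature for $r=n$'': it makes visible that completability is entirely unconstrained here, so the only obstruction for $1\le r<n$ is the rank bound on the completion itself, not any subtlety about the unrestricted shadow. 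For $r=0$ your argument and the paper's coincide.
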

\begin{proof}
The $r=0$ follows from $\PP(0)=0$. For $r=n$,
we can always set the unspecified off-diagonal elements to $0$; and then
we set the diagonal elements large enough to ensure positive definiteness.
\end{proof}
From our results we now only have one case to consider: $0<r<n$ and 
all the vertices of our connected graph are loopless. Unfortunately,
we do not have simple results for closedness.

We will look at exclusions to begin with, and
then provide theorems for closure and completion.

\subsubsection{Rank One Case, $r=1$}
\label{rankoneexclusions}
We begin by looking at examples of the simplest case, the rank one case, $r=1$.
In fact, the following two instances characterize failure of closure for
the rank one case, see Corollary \ref{cor:rankone}, below.
\begin{lemma}
\label{lem:triangle}
If the graph $G$ has a triangle, a cycle of length $3$,
then $\psnr{1}$ is not closed.
\end{lemma}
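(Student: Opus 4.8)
The plan is to exhibit an explicit sequence of partial matrices whose rank-one completions exist along the sequence but whose limit is not rank-one completable, mimicking the mechanism of Corollary \ref{cor:discon} but now using a triangle to force a contradiction at the limit. Suppose vertices $1,2,3$ form a triangle in $G$, i.e.\ $12,13,23\in E$. Since the graph is loopless under Assumption \ref{assump:connected}, a rank-one matrix $X=vv^T$ with $v\in\R^n$ has entries $X_{ij}=v_iv_j$ on the triangle, and the three off-diagonal entries satisfy the multiplicative relation $X_{12}X_{13}X_{23}=(v_1v_2v_3)^2\ge 0$. This sign obstruction is the key: a limiting partial matrix that assigns the triangle entries a product that is negative (or is forced to be zero in a way incompatible with the other prescribed data) cannot be the image of any rank-one PSD matrix.

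Concretely, I would take a sequence $a^j\in\R^E$ with $a^j_{12}=1$, $a^j_{13}=1$, $a^j_{23}=-1/j$, and all entries on edges not in the triangle equal to $0$. For each $j$ this is realized by the rank-one matrix $X^j=v^j(v^j)^T$ where I choose $v^j$ supported on coordinates $1,2,3$ with $v^j_1=\sqrt{j}$, $v^j_2=1/\sqrt{j}$, $v^j_3=-1/\sqrt{j}$, and $v^j_\ell=0$ for $\ell\ge 4$; then $v^j_1v^j_2=1$, $v^j_1v^j_3=-1$ — so in fact I should set $a^j_{13}=-1$ to match — and $v^j_2v^j_3=-1/j$, while the off-triangle edges, all incident to the zero coordinates or to at most one of $\{1,2,3\}$, can be made $0$ by the support choice (here I use that the two classes $\LL,\LL^c$ are disconnected, so the triangle is genuinely a connected piece and the other prescribed entries can be taken $0$ without interfering). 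Thus $a^j\in\psnr{1}$ for every $j$. As $j\to\infty$, $a^j\to\bar a$ with $\bar a_{12}=1$, $\bar a_{13}=-1$, $\bar a_{23}=0$, and zeros elsewhere.

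Finally I would check that $\bar a$ is not rank-one PSD completable. If $\bar a=\PP(vv^T)$ for some $v$, then $v_1v_2=1$ forces $v_1,v_2\ne 0$, and $v_1v_3=-1$ forces $v_3\ne 0$; but then $v_2v_3=(v_1v_2)(v_1v_3)/v_1^2=-1/v_1^2\ne 0$, contradicting $\bar a_{23}=0$. Hence no rank-one completion exists, so $\psnr{1}$ is not closed. The main obstacle — and the place requiring care — is the bookkeeping on the \emph{non-triangle} edges: one must verify that the chosen low-rank witnesses $X^j$ really do have the prescribed (zero) values on all of $E$ outside the triangle, which is where the disconnectedness of $\LL,\LL^c$ from Assumption \ref{assump:disconn} and the freedom to pad $v^j$ with zeros get used. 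If the triangle shares vertices with other edges carrying forced data this needs a slightly more careful choice of $v^j$, but since we are free to design the sequence $a^j$ (setting all non-triangle data to $0$) this is not a genuine difficulty, only a verification.
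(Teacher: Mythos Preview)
Your argument is correct and follows essentially the same route as the paper: build a rank-one sequence $X^j=v^j(v^j)^T$ supported on the triangle with one coordinate of order $\sqrt{j}$ and the other two of order $1/\sqrt{j}$, so that one triangle edge tends to $0$ while the other two stay nonzero, and then observe that the limiting partial matrix forces $v_1,v_2,v_3\neq 0$ yet $v_2v_3=0$, a contradiction. The only differences are cosmetic (your choice of signs and which coordinate diverges), and your opening remark about the sign constraint $X_{12}X_{13}X_{23}\ge 0$ is an aside that is never actually used---the real obstruction, as in the paper, is that two nonzero triangle entries force all three $v_i\neq 0$.
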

\begin{proof}
Without loss of generality, we can let the triangle be formed by the 
vertices $\{1,2,3\}$. Let 
\[
v^j = \begin{pmatrix} \frac{1}{\sqrt j} & \frac{1}{\sqrt j} & \sqrt
j & 0 &  \ldots & 0 \end{pmatrix}^T
\in \Rn, \quad X^j = v^j(v^j)^T \in \snr{1}. 
\]
Then, with $E=\{12,13,23, \ldots\}$, we have
\[
\PP(X^j)=
 \begin{pmatrix} \frac{1}{j} & 1  &1 &  0 &
\ldots & 0 \end{pmatrix}^T \to \begin{pmatrix} 0 & 1  &1 &  0 &
\ldots & 0 \end{pmatrix}^T,
\]
and
$\begin{bmatrix}
? & 0 & 1 & \ldots\\
0 & ? & 1 & \ldots\\
1 & 1 & ? & \ldots\\
\vdots & \vdots & \vdots & \ddots
\end{bmatrix}$ has no rank one completion.
\end{proof}
\begin{lemma}
\label{lem:path}
If $G$ has a path of length $3$ that is not a cycle (of length
$4$), then $\psnr{1}$ is not closed.
\end{lemma}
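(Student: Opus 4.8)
The plan is to mimic the triangle construction of Lemma~\ref{lem:triangle}, but strung out along the path so that the middle edge collapses to $0$ while the two end edges stay at $1$. Relabel the vertices so that the path of length $3$ is on $1,2,3,4$ with $12,23,34\in E$; by hypothesis these four vertices do not form a $4$-cycle, so $14\notin E$, and recall $G$ is loopless. Take
\[
v^j=\sqrt j\,e_1+\tfrac1{\sqrt j}\,e_2+\tfrac1{\sqrt j}\,e_3+\sqrt j\,e_4\in\Rn,
\qquad X^j=v^j(v^j)^T\in\snr1 ,
\]
so that $\PP(X^j)\in\psnr1$ for all $j$.

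Next I would read off the images coordinate by coordinate. On the three path edges, $X^j_{12}=1$, $X^j_{23}=1/j$, $X^j_{34}=1$; on a chord of the path that lies in $E$ (only $13$ or $24$ are possible) the value is $1$; on every other edge of $E$ the value is $0$, since such an edge must be incident to a vertex outside $\{1,2,3,4\}$ where $v^j$ vanishes. The diagonal entries $X^j_{11}=X^j_{44}=j$ blow up but are irrelevant because $G$ is loopless, and the off-diagonal blow-up $X^j_{14}=j$ is never recorded because $14\notin E$ --- this is exactly where the exclusion of the $4$-cycle is used. Hence $\PP(X^j)\to\bar a$, where $\bar a_{12}=\bar a_{34}=1$, $\bar a_{23}=0$, $\bar a_{13}=1$ and/or $\bar a_{24}=1$ when those chords are present, and all remaining coordinates vanish.

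It then remains to check $\bar a\notin\psnr1$. If some $\bar X\succeq0$ of rank at most $1$ satisfied $\PP(\bar X)=\bar a$, then $\bar X\neq0$ because $\bar a_{12}=1$, so $\bar X=vv^T$ for some $v\in\Rn$; then $v_1v_2=1$ and $v_3v_4=1$, so $v_1,v_2,v_3,v_4$ are all nonzero, which contradicts $v_2v_3=\bar a_{23}=0$. A present chord only adds a constraint of the same kind (e.g.\ $v_1v_3=1$), so it cannot remove the contradiction. Therefore $\bar a$ is a limit of points of $\psnr1$ that does not lie in $\psnr1$, so $\psnr1$ is not closed.

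The only step needing care, as flagged above, is verifying that the $4$-cycle exclusion is precisely what keeps the $(1,4)$ entry out of the projection (so the image sequence converges) and that the optional chords $13,24$ neither break convergence nor restore rank-one completability; everything else is the routine arithmetic of the products $v^j_iv^j_k$.
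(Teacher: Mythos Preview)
Your proposal is correct and follows essentially the same construction as the paper: the same vector $v^j=(\sqrt j,1/\sqrt j,1/\sqrt j,\sqrt j,0,\ldots,0)^T$, the same rank-one matrices $X^j=v^j(v^j)^T$, and the same limiting partial matrix with $\bar a_{12}=\bar a_{34}=1$, $\bar a_{23}=0$. You are somewhat more explicit than the paper in handling the optional chords $13,24$ (the paper only notes these in a footnote) and in spelling out that the $4$-cycle exclusion is precisely what keeps the divergent entry $X^j_{14}=j$ out of the projection; the paper relegates that observation to a remark after the proof.
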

\begin{proof}
Without loss of generality, we can let the path be defined by
the first four distinct vertices $1,2,3,4$. Let
\[
v^j = \begin{pmatrix} \sqrt j&\frac{1}{\sqrt j} &  \frac{1}{\sqrt j} 
& \sqrt j & 0 &  \ldots & 0 \end{pmatrix}^T
\in \Rn, \quad X^j = v^j(v^j)^T \in \snr{1}. 
\]
Then, with $E=\{12,23,34 \ldots\}$\footnote{We could choose
$E=\{12,13,23,24,34 \ldots\}$, $E=\{12,23,24,34 \ldots\}$,
or $E=\{12,13,23,34 \ldots\}$.}
\[
\PP(X^j)=
 \begin{pmatrix} 1 & \frac{1}{j} & 1  &0 &  0 &
\ldots & 0 \end{pmatrix}^T \to \begin{pmatrix} 1 & 0  &1 &  0 &
\ldots & 0 \end{pmatrix}^T,
\]
and
  $ \begin{bmatrix}
? & 1 & ? & ? & \ldots\\
1 & ? & 0 & ? & \ldots\\
? & 0 & ? & 1 & \ldots\\
? & ? & 1 & ? & \ldots\\
\vdots & \vdots & \vdots & \ddots
\end{bmatrix}$ has no rank one completion.
\end{proof}
Note that the path in the instance in the proof of Lemma \ref{lem:path} 
cannot be a cycle since the $(1,4)$ entry in $X^j$ diverges to $+\infty$.
\begin{remark}
Note that we could extend the above two lemmas to higher rank using
orthogonal vectors. For example, for Lemma \ref{lem:triangle} with rank 
$3$, we could use a cycle of length $5$ and use two orthogonal vectors
\[
v_{\pm}^j = \begin{pmatrix} 
\frac{1}{\sqrt j} & \pm \frac{1}{\sqrt j} &
\frac{1}{\sqrt j} & \pm \frac{1}{\sqrt j} 
& \sqrt j &  \pm \sqrt j 
& 0 &  \ldots & 0 \end{pmatrix}^T
\in \Rn, \quad X^j = \sum_{\pm} v_{\pm}^j(v_{\pm}^j)^T \in \snr{2}. 
\]
The limit yields a partial matrix with no rank $2$ PSD completion.
We could similarly extend Lemma \ref{lem:path}.
\end{remark}

\section{Bipartite Graphs, Independent Sets, Cliques}
\label{sect:bipartG}
We now look at first sufficient and then necessary conditions for closure.
Recall that Assumption \ref{assump:connected} holds.

\subsection{Complete Bipartite Graphs}
The graphs in the above two examples in Lemmas \ref{lem:triangle} and 
\ref{lem:path} where closure can fail for $\psnr{1}$
are both complete bipartite graphs. Recall that a graph is 
\textdef{bipartite} means that it is \emph{$2$-colourable}, 
i.e.,~the vertices can be 
coloured using two colours with no two adjacent nodes having the same colour.
We now see that $G$ complete bipartite provides a sufficient condition
for closure for all $r$ and characterizes closure for $r=1$.

\begin{prop}[{\cite[Prop. 1.6.1]{MR1743598}}]
\label{prop:bipartite}
A graph is bipartite if, and only if, it contains no odd cycle.
 \qed
\end{prop}

\begin{lemma}
\label{lem:tricycbip}
A graph $G$ is complete bipartite if, and only if,
$G$ has no triangle and every path of length $3$ forms a cycle
(of length $4$). 
\end{lemma}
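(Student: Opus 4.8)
The plan is to prove both directions by translating the two combinatorial conditions (no triangle, every path of length $3$ completes to a $4$-cycle) into the structural statement that $G$ is complete bipartite. For the forward direction, suppose $G$ is complete bipartite with parts $U,W$. Then $G$ has no odd cycle by Proposition~\ref{prop:bipartite}, so in particular no triangle. For a path $a-b-c-d$ of length $3$: since edges join only $U$ to $W$, the vertices alternate sides, so $a,c$ lie in one part and $b,d$ in the other; because $G$ is \emph{complete} bipartite, the edge $ad$ is present, and hence $a-b-c-d-a$ is a $4$-cycle. (One should note $a\neq d$ here, since otherwise $a-b-c-a$ would be a triangle, already excluded.)

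For the converse, assume $G$ has no triangle and every length-$3$ path closes into a $4$-cycle; we must produce a bipartition into two parts that are completely joined. First I would show $G$ is bipartite via Proposition~\ref{prop:bipartite}, i.e.\ rule out odd cycles. No triangles handles cycles of length $3$; for an odd cycle $C=v_1 v_2 \cdots v_{2m+1} v_1$ with $m\geq 2$, take the sub-path $v_1 v_2 v_3 v_4$ of length $3$. By hypothesis $v_1 v_4 \in E$, which produces a chord; using this chord I can split $C$ into a triangle-or-shorter-odd-cycle plus an even cycle, and iterate (or directly: $v_1v_2v_3v_4v_1$ is a $4$-cycle, and $v_1 v_4 v_5 \cdots v_{2m+1} v_1$ is an odd cycle of length $2m-1<2m+1$), giving an induction down to a triangle, contradiction. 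Hence $G$ is bipartite; since $G$ is connected (Assumption~\ref{assump:connected}) and bipartite with $n\geq 2$ vertices, the bipartition $(U,W)$ is unique and both parts are nonempty.

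It remains to show every $u\in U$ is adjacent to every $w\in W$. This is where I expect the main work, and it uses connectedness together with the path-closing hypothesis. Fix $u\in U$, $w\in W$. Since $G$ is connected there is a shortest $u$–$w$ path; as $u,w$ are on opposite sides its length is odd, say $2\ell+1$. If $\ell=0$ we are done. If $\ell\geq 1$, the path begins $u=p_0, p_1, p_2, p_3,\dots$ with $p_0\in U$, $p_3\in U$; the length-$3$ sub-path $p_0p_1p_2p_3$ forces $p_0p_3\in E$. But $p_0,p_3$ are both in $U$, so $p_0p_3$ would be an edge inside one part, contradicting bipartiteness — unless $p_0=p_3$, impossible on a shortest path. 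Hence no such $\ell\geq 1$ occurs, so every $u$–$w$ distance is $1$, i.e.\ $G$ is complete bipartite.

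The main obstacle is the converse's second half: ensuring the hypotheses genuinely force \emph{completeness} of the bipartition rather than merely bipartiteness. The argument above pins this down by observing that any $U$–$U$ (or $W$–$W$) edge is forbidden, so the length-$3$-path-closing rule, applied to the first three edges of any shortest odd path between opposite parts, collapses all such distances to $1$. One must be slightly careful that connectedness is available (it is, by Assumption~\ref{assump:connected}) and that the degenerate case $p_0=p_3$ is excluded by shortest-path-ness; both are routine.
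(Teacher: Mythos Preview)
Your forward direction and your induction showing bipartiteness are fine (and in fact more careful than the paper's rather terse treatment of the odd-cycle step). The completeness step, however, contains a genuine parity error.

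You write: ``the path begins $u=p_0, p_1, p_2, p_3,\dots$ with $p_0\in U$, $p_3\in U$; \dots\ But $p_0,p_3$ are both in $U$, so $p_0p_3$ would be an edge inside one part, contradicting bipartiteness.'' This is false: in a bipartite graph, three steps from $U$ land you in $W$, not $U$. So $p_3\in W$, and the edge $p_0p_3$ lies \emph{between} the parts, perfectly consistent with bipartiteness. Your stated contradiction does not fire.

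The repair is immediate and is exactly what the paper does: the chord $p_0p_3$ gives the strictly shorter $u$--$w$ walk $p_0,p_3,p_4,\ldots,p_{2\ell+1}$ of length $2\ell-1$, contradicting minimality of the original shortest path. (The paper applies the closing hypothesis to the \emph{last} three edges of the shortest path rather than the first, but the mechanism is identical.) Your own summary sentence --- that the path-closing rule ``collapses all such distances to $1$'' --- is the right intuition; you just need to cash it out as a shortest-path contradiction rather than a within-one-part contradiction.
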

\begin{proof}
For sufficiency suppose that
$G$ has an odd cycle. Then it is either a triangle or it must
contain a path of (at least) length $3$. That $G$ is bipartite now
follows from the characterization in Proposition \ref{prop:bipartite}.

%First, we show that $G$ is bipartite. Assume that $G$ is not, then $G$ has an odd cycle, $x_1x_2\ldots x_kx_1$ where $k$ is odd. Since $G$ does not have a triangle, we have $k>4$. Also, since all paths of length 4 form a cycle, it must be that $x_1x_2x_3x_4x_1$ exists. So $x_1x_4 \in E$. This implies that any $x_i$ for even $i$ must have $x_1x_i\in E$, as otherwise $x_1x_{i-2}x_{i-1}x_i$ would be a path that does not form a cycle. But then $x_1x_{k-1}x_kx_1$ is a triangle. Therefore $G$ is bipartite.

Now, if $G$ was not complete, then there exists $x,y$ in 
different partitions that 
are not adjacent. Then, consider the shortest path from $x$ to $y$: 
$x,z_1,z_2\ldots,z_k,y$. This path has length at least 4. Moreover, 
$z_{k-2}z_{k_1}z_ky$ is a path that does not form a cycle.

For necessity, we immediately see that $G$ cannot have a triangle from
Proposition \ref{prop:bipartite}. And, if $G$ has a path of length $3$,
then without loss of generality the nodes are $1,2,3,4$. Then looking at
all possible cases means that completeness implies there is a cycle,
i.e.,~we have a contradiction.
\end{proof}

\begin{cor}
Suppose that $\psnr{1}$ is closed. Then $G$ is a complete bipartite graph. 
\end{cor}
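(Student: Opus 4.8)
The plan is to obtain the statement as the contrapositive of the two exclusion lemmas, combined with the combinatorial characterization of complete bipartite graphs in Lemma \ref{lem:tricycbip}. Recall that Assumption \ref{assump:connected} is in force, so $G$ is connected; this is essential, since for a disconnected graph closedness of $\psnr{1}$ would only force each connected component to be complete bipartite (via Corollary \ref{cor:connected}), not the whole graph.

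First I would assume $\psnr{1}$ is closed and argue that $G$ has no triangle: otherwise Lemma \ref{lem:triangle} would produce a sequence in $\psnr{1}$ whose image limit is not completable, contradicting closedness. Next, I would show that every path of length $3$ in $G$ forms a $4$-cycle: if some path $1,2,3,4$ of length $3$ were not a $4$-cycle, Lemma \ref{lem:path} would again contradict closedness of $\psnr{1}$. (One should note, as the remark following Lemma \ref{lem:path} already records, that the configuration of known/unknown edges among $\{1,2,3,4\}$ in that lemma is handled by the footnoted variants of $E$, so ``not a $4$-cycle'' is exactly the negation needed here.)

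Having established that $G$ is triangle-free and that every path of length $3$ closes into a $4$-cycle, I would invoke Lemma \ref{lem:tricycbip} directly: these two properties are precisely equivalent to $G$ being complete bipartite (using connectivity of $G$, which is where the ``complete'' part comes from in that lemma's proof). This yields the conclusion.

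There is essentially no analytic obstacle here — the content sits entirely in the earlier lemmas. The only point requiring a little care is making sure the two exclusion lemmas between them cover \emph{every} graph that is not complete bipartite: a non–complete-bipartite connected graph either contains an odd cycle, hence (by Proposition \ref{prop:bipartite} and the argument in Lemma \ref{lem:tricycbip}) a triangle or a path of length $3$ not lying on a $4$-cycle, or is bipartite but not complete, hence again contains a shortest path of length $\geq 4$ and therefore a path of length $3$ that is not a $4$-cycle. Both cases are excluded by Lemmas \ref{lem:triangle} and \ref{lem:path}, so I would simply note this case analysis is already packaged inside Lemma \ref{lem:tricycbip} and cite it.
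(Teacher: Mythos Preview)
Your proposal is correct and follows exactly the paper's route: closedness of $\psnr{1}$ forces the hypotheses of Lemmas \ref{lem:triangle} and \ref{lem:path} to fail, and then Lemma \ref{lem:tricycbip} packages these two conditions into the conclusion that $G$ is complete bipartite. The only difference is that you spell out the case analysis that is already implicit in Lemma \ref{lem:tricycbip}; the paper simply cites it.
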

\begin{proof}
If $\psnr{1}$ is closed, then the conditions
in Lemmas \ref{lem:triangle} and \ref{lem:path} fail which by
Lemma \ref{lem:tricycbip} imply that $G$ is a complete bipartite graph. 
\end{proof}

We now use a characterization of the minimum rank PSD completion of a 
complete bipartite graph to obtain sufficiency for closure for all $r$.
\begin{theorem}
Let $G$ be a complete bipartite graph. Then $\psnr{r}$ is closed for all 
$0\leq r \leq n$.
\end{theorem}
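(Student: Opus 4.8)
Since $G$ is complete bipartite, by Theorem~\ref{thm:partitclose} (applied with $k=1$, noting $G$ is already connected by Assumption~\ref{assump:connected}) we need only handle the single-component case; equivalently, $V$ is partitioned into two independent sets $V_1, V_2$ with $|V_1| = p$, $|V_2| = q$, $p+q = n$, and $E = \{ij : i\in V_1,\, j\in V_2\}$. So a point $x\in\R^E$ is exactly a $p\times q$ real matrix $M$ recording the off-diagonal block, and $x\in\PP(\Snrpc)$ if and only if there exist PSD blocks $A\in\Scal^p$, $B\in\Scal^q$ with
\[
X = \begin{bmatrix} A & M \\ M^T & B \end{bmatrix} \succeq 0, \qquad \rank(X) \le r.
\]
The plan is to give an intrinsic characterization of when such a completion exists purely in terms of $M$ and $r$, and then observe that this characterization defines a closed set.

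\textbf{Key steps.} First I would recall the standard fact (the "characterization of the minimum rank PSD completion of a complete bipartite graph" alluded to in the text just before the theorem) that the partial matrix with off-diagonal block $M$ has a PSD completion, and that the \emph{minimum} rank over all PSD completions equals $\rank(M)$: indeed, writing the thin SVD $M = U\Sigma W^T$ with $U\in\R^{p\times s}$, $W\in\R^{q\times s}$, $\Sigma\succ 0$ diagonal, $s=\rank(M)$, the completion
\[
\begin{bmatrix} U\Sigma^{1/2} \\ W\Sigma^{1/2} \end{bmatrix}
\begin{bmatrix} U\Sigma^{1/2} \\ W\Sigma^{1/2} \end{bmatrix}^T
= \begin{bmatrix} U\Sigma U^T & M \\ M^T & W\Sigma W^T \end{bmatrix} \succeq 0
\]
has rank exactly $s$, and no PSD completion can have rank below $\rank(M)$ since $M$ is a submatrix. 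Consequently $x = M$ lies in $\PP(\Snrpc)$ if and only if $\rank(M) \le r$. (If $r=0$ this reads $M=0$, consistent with $\PP(\{0\}) = \{0\}$; and one should check the boundary cases $p=0$ or $q=0$, i.e. $E=\emptyset$, where the statement is vacuous.) Second, I would conclude: the set $\PP(\Snrpc) \cong \{ M\in\R^{p\times q} : \rank(M)\le r\}$ is the variety of matrices of rank at most $r$, which is \emph{closed} by lower semicontinuity of rank (equivalently, it is the zero set of all $(r+1)\times(r+1)$ minors). Hence $\PP(\Snrpc)$ is closed.

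\textbf{Main obstacle.} The only real content is the lower bound on the completion rank, i.e. proving $\min\{\rank(X) : X \text{ a PSD completion}\} = \rank(M)$ rather than just $\le \rank(M)$; the inequality $\rank(X)\ge\rank(M)$ is immediate from the submatrix fact, so this is actually routine, and the bulk of the theorem is simply the translation via Theorem~\ref{thm:partitclose} into the bipartite block form plus the observation that $\{\rank \le r\}$ is Zariski-closed. The one place to be slightly careful is making sure the restriction-to-a-single-connected-component reduction is legitimate here (it is, by Corollary~\ref{cor:connected} together with the fact that a disjoint union of complete bipartite graphs has each component complete bipartite) and handling the degenerate cases $|\LL|=0$ with an empty edge set cleanly.
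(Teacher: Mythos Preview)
Your proposal is correct and follows essentially the same approach as the paper: after putting the partial matrix in the block form $\begin{bmatrix} ? & B \\ B^T & ? \end{bmatrix}$, both arguments identify $\PP(\Snrpc)$ with $\{B : \rank(B)\le r\}$ via a rank factorization of $B$ (you use the SVD, the paper uses an arbitrary full-rank factorization $B=PQ^T$), and then invoke lower semicontinuity of rank. The only cosmetic difference is that the paper phrases this as a direct sequence argument ($B^i\to B$ with $\rank(B^i)\le r$ forces $\rank(B)\le r$, then build the completion), whereas you first state the intrinsic characterization of the image and then note it is closed; your detour through Theorem~\ref{thm:partitclose} with $k=1$ is unnecessary but harmless.
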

\begin{proof}
Let $a^i$ be a sequence of partial matrices with $\PP(X^i) =
a^i\to a$ and $\rank(X_i) \leq r$. Since $G$ is complete bipartite we 
can permute the vertices of so that one partition is $\{1\ldots k\}$ 
while the other is $\{k+1\ldots n\}$. This leads to matrices of the form 
$\begin{bmatrix} ? & B\\ B^T&?\\ \end{bmatrix}$,
where $B^i\to B$ is a complete matrix with $\rank(B^i) \leq r$.
By the lower semi-continuity of the rank function, we get that 
$\rank(B)\leq r$. Let $B=PQ^T$ be a full rank decomposition of $B$ and
let
\[
X=
 \begin{bmatrix}
P\\Q
\end{bmatrix}
 \begin{bmatrix}
P\\Q
\end{bmatrix}^T.
\]
We conclude that 
\[
X\in \psnr{r}, \qquad  \PP(X)=a, \quad a_{ij}=B_{ij}, \forall ij \in E.
\]
\end{proof}
\begin{cor}
\label{cor:rankone}
Let $r=1$. Then $G$ is a complete bipartite graph if, and only if,
$\psnr{r}$ is closed.
\qed
\end{cor}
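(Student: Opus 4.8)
The plan is to obtain this corollary by simply assembling the two results that immediately precede it, so no new argument is needed. For the implication ``$G$ complete bipartite $\Rightarrow \psnr{1}$ closed'', I would invoke the theorem just established, namely that a complete bipartite graph $G$ has $\psnr{r}$ closed for every $0\le r\le n$, and specialize to $r=1$. That settles one direction with a single sentence.

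For the converse, ``$\psnr{1}$ closed $\Rightarrow G$ complete bipartite'', I would quote the corollary proved above (``if $\psnr{1}$ is closed then $G$ is complete bipartite''). If one wanted to spell it out rather than cite it, the argument is the contrapositive: suppose $G$ is not complete bipartite. By Lemma \ref{lem:tricycbip}, $G$ then either contains a triangle or contains a path of length $3$ that is not a cycle of length $4$. In the first case Lemma \ref{lem:triangle} produces a sequence $X^j=v^j(v^j)^T\in\snr{1}$ whose coordinate shadows $\PP(X^j)$ converge to a partial matrix with no rank-one PSD completion; in the second case Lemma \ref{lem:path} does the same. Either way $\PP(X^j)\in\psnr{1}$ for all $j$ but the limit is not in $\psnr{1}$, so $\psnr{1}$ is not closed.

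There is essentially no obstacle here; the only point deserving a moment's care is that the dichotomy in Lemma \ref{lem:tricycbip} is exactly the disjunction of the negations of the hypotheses of Lemmas \ref{lem:triangle} and \ref{lem:path}, i.e.\ that ``$G$ not complete bipartite'' is equivalent to ``$G$ has a triangle, or $G$ has a length-$3$ path that is not a $4$-cycle.'' Since this equivalence is precisely the content of Lemma \ref{lem:tricycbip}, the corollary follows at once, which is why it is stated with a \qed and its proof can be omitted.
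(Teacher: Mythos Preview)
Your proposal is correct and matches the paper's approach exactly: the corollary is stated with a \qed\ and no proof, precisely because it is the conjunction of the preceding corollary (``$\psnr{1}$ closed $\Rightarrow$ $G$ complete bipartite'') and the preceding theorem (``$G$ complete bipartite $\Rightarrow$ $\psnr{r}$ closed for all $r$'') specialized to $r=1$. Your optional unpacking via Lemmas \ref{lem:triangle}, \ref{lem:path}, and \ref{lem:tricycbip} is also faithful to how the paper establishes that earlier corollary.
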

\begin{remark}
\label{cor:rankonenuclnorm}
Let $Z\in \Rmn$ be given rank one data matrices that are sampled at the
coordinates $ij\in \Omega$, i.e.,~$Z_\Omega$ is given data.
Then Corollary \ref{cor:rankone} indicates that the nuclear norm
(sum of the singular values)
heuristic cannot recover all instances unless all of $Z$ is sampled.  Recall, 
e.g.,~\cite{Rechtparrilofazel}, that the nuclear norm heuristic for 
rank minimization is equivalent to solving the SDP
\[
\begin{array}{clc}
\min & \frac 12 \trace Y\\
\text{s.t.} & Y=\begin{bmatrix} A & Z\cr Z^T & B \end{bmatrix}\succeq 0\\
          &   Y_E=Z_\Omega,
\end{array}
\]
where $E$ are the edges corresponding appropriately to the coordinates $\Omega$.
From the above results we can find classes of examples where closure
fails and no rank one completion can be found. This means that we have
classes of examples where the nuclear norm heuristic fails for data
matrices $Z$ with rank one.
\end{remark}

\subsection{Independent Sets}
Finding independent sets provide sufficient conditions for closure.
First we need the following.
\begin{lemma}
\label{lem:complblock}
Let $Y_C := \begin{bmatrix} A & B\\ B^T & C\\ \end{bmatrix}\succeq 0$
be given. Then the minimum rank PSD completion problem 
\[
\begin{array}{ccc}
\min_X & \rank \left(
 \begin{bmatrix} A & B\\ B^T & X\\ \end{bmatrix} \right) \\
 \text{s.t.} & 
 Y_X=\begin{bmatrix} A & B\\ B^T & X\\ \end{bmatrix}\succeq 0
\end{array}
\]
has optimal solution $X^*= B^TA^{\dagger}B$, where $\cdot^\dagger$ denotes
the Moore-Penrose generalized inverse. Moreover, $\rank(Y_{X^*}) =
\rank(A)$.

Then $X$ is positive semidefinite and $\rank(X) = \rank(A)$.
$X = \begin{bmatrix} A & B\\ B^T & C\\ \end{bmatrix}\succeq 0$.
	
	where $A$ is positive definite and $C = B^TA^{-1}B$. Then $X$ is positive semidefinite and $\rank(X) = \rank(A)$.
\end{lemma}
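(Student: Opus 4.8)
The plan is to reduce everything to the generalized Schur complement of $A$ in the block matrix. The structural fact that makes the problem well posed is that the hypothesis $Y_C\succeq 0$ already forces $A\succeq 0$ together with the range inclusion $\range(B)\subseteq\range(A)$; equivalently $AA^{\dagger}B=B$. Indeed $A$ is a principal submatrix of $Y_C$, so $A\succeq 0$, and if some column of $B$ had a component orthogonal to $\range(A)$ then a suitable two-dimensional compression of $Y_C$ would fail to be PSD. Granting this, $X^{*}:=B^{T}A^{\dagger}B$ is a well-defined symmetric matrix (the pseudoinverse of a symmetric matrix is symmetric), and it is precisely the Schur complement of $A$ in $Y_{X^{*}}$.

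First I would check feasibility of $X^{*}$ and compute $\rank(Y_{X^{*}})$ directly. Writing $A^{1/2}$ for the PSD square root and $M:=\bmat{A^{1/2}\\ B^{T}(A^{1/2})^{\dagger}}$, a short multiplication using $A^{1/2}(A^{1/2})^{\dagger}=AA^{\dagger}$, $(A^{1/2})^{\dagger}A^{1/2}=A^{\dagger}A$, $\bigl((A^{1/2})^{\dagger}\bigr)^{2}=A^{\dagger}$, and the range inclusion $AA^{\dagger}B=B$ shows $MM^{T}=Y_{X^{*}}$. Hence $Y_{X^{*}}\succeq 0$, so $X^{*}$ is feasible, and $\rank(Y_{X^{*}})=\rank(M)=\rank\bmat{A^{1/2}&(A^{1/2})^{\dagger}B}=\rank(A^{1/2})=\rank(A)$, the middle equality holding because $\range\bigl((A^{1/2})^{\dagger}B\bigr)\subseteq\range\bigl((A^{1/2})^{\dagger}\bigr)=\range(A^{1/2})$.

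Next I would prove optimality by a one-line lower bound: for any feasible $X$ the matrix $A$ is a principal submatrix of $Y_{X}$, so $\rank(Y_{X})\ge\rank(A)=\rank(Y_{X^{*}})$. Thus $X^{*}$ attains the minimum and $\rank(Y_{X^{*}})=\rank(A)$. To make the picture transparent — and to recover the positive definite special case $A\succ 0$, $C=B^{T}A^{-1}B$ alluded to in the statement — I would also record the congruence
\[
Y_{X}=\bmat{I&0\\ B^{T}A^{\dagger}&I}\bmat{A&0\\ 0& X-B^{T}A^{\dagger}B}\bmat{I& A^{\dagger}B\\ 0&I},
\]
which is valid exactly because $A$ is symmetric and $AA^{\dagger}B=B$. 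The outer factors are unit triangular, hence invertible, so this yields the exact identity $\rank(Y_{X})=\rank(A)+\rank\bigl(X-B^{T}A^{\dagger}B\bigr)$; this re-proves the lower bound and shows in addition that $X^{*}$ is the \emph{unique} minimizer.

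The only place requiring care is the bookkeeping with the Moore--Penrose inverse when $A$ is singular; every identity used above (the projection formulas, $AA^{\dagger}B=B$, the shape of the congruence) rests on the single fact $\range(B)\subseteq\range(A)$, which the hypothesis $Y_{C}\succeq 0$ supplies for free. So I do not expect a genuine obstacle: the argument is the classical rank formula for a $2\times 2$ block PSD matrix, written with pseudoinverses.
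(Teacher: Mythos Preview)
Your proof is correct and follows essentially the same route as the paper: both establish feasibility and $\rank(Y_{X^*})=\rank(A)$ via the Gram factorization $Y_{X^*}=MM^{T}$ with $M=\bmat{A^{1/2}\\ B^{T}(A^{1/2})^{\dagger}}$, after first extracting the range inclusion $\range(B)\subseteq\range(A)$ from the hypothesis $Y_C\succeq 0$. Your version is in fact more complete, since you supply the explicit lower bound $\rank(Y_X)\ge\rank(A)$ needed for optimality (the paper leaves this implicit) and add the Schur-complement congruence yielding uniqueness of the minimizer, which the paper does not address.
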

\begin{proof}
From the full rank factorization using the unique PSD square roots,
we have
\[
Y_C= \begin{bmatrix} A^{1/2} \cr C^{1/2} \end{bmatrix}
     \begin{bmatrix} A^{1/2} \cr C^{1/2} \end{bmatrix}^T, \qquad
B =  A^{1/2} C^{1/2}.
\] 
This means that $\range(B) \subseteq \range(A)$. The range condition
implies that the projection can be discarded 
$B=A^{1/2}(A^{1/2})^\dagger B$. We now define
\[
\begin{array}{rcl}
 Y_{X^*}
    &=&
\begin{bmatrix} A^{1/2} \cr B^T(A^{1/2})^\dagger \end{bmatrix}
     \begin{bmatrix} A^{1/2} \cr B^T(A^{1/2})^\dagger\end{bmatrix}^T
    \\&=&
      \begin{bmatrix} A  & 
 A^{1/2}(A^{1/2})^\dagger B \cr 
 B^T(A^{1/2})^\dagger A^{1/2} &
         B^T(A^{1/2})^\dagger 
         (A^{1/2})^\dagger B
\end{bmatrix}
    \\&=&
      \begin{bmatrix} A  & B \cr 
 B^T& B^TA^\dagger B
\end{bmatrix}.
\end{array}
\]
\end{proof}

Recall that an \textdef{independent set} (or stable set)
is a set vertices in a graph, no two of which are adjacent.
We can always complete the corresponding matrix to rank at most $n-k$.

\begin{cor}
Let $G$ be a graph with an independent set of size $k$. Then 
$\psnr{(n-k)}$ is closed.
\end{cor}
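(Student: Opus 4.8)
The plan is to exhibit an explicit permutation of the vertices that exposes an independent set as a ``free'' block and then apply the block-completion idea from Lemma \ref{lem:complblock} together with lower semicontinuity of rank. Suppose $I \subseteq V$ is an independent set with $|I| = k$. After relabeling the vertices we may assume $I = \{n-k+1, \ldots, n\}$, so that no edge of $E$ joins two vertices of $I$; equivalently, every partial matrix $X$ with $\PP(X) = a$ has all its $I \times I$ entries free (none of them is prescribed). Partition accordingly as $X = \begin{bmatrix} A & B \\ B^T & C \end{bmatrix}$, where the $A$-block is indexed by $V \setminus I$, the $C$-block by $I$, and only entries inside $A$ and inside $B$ may be prescribed by $a$.

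The key step is the completion construction. Take a sequence $X^j \in \Snrpc$ with $\PP(X^j) = a^j \to a$. Since $X^j \succeq 0$, each diagonal principal submatrix indexed by $V \setminus I$ is itself PSD and its entries are (among) the prescribed data, so the partial data on $A \cup B$ converges to a partial PSD matrix; more precisely, one extracts a PSD limit $\begin{bmatrix} A & B \end{bmatrix}$ with $\range(B) \subseteq \range(A)$ and $\rank\begin{bmatrix} A \\ B^T\end{bmatrix}^T$-style bound $\le r$ inherited from $\rank(X^j) \le r$ and lower semicontinuity of rank. (Here one must be a little careful: the full block $\begin{bmatrix} A & B\\ B^T & *\end{bmatrix}$ need not converge, but the $A$ and $B$ entries that are actually prescribed do, and the unprescribed $A$-entries can be handled exactly as in Corollary \ref{cor:nm1} since each connected component either has all loops or none, per Assumption \ref{assump:connected}.) Now set $C := B^T A^{\dagger} B$ and $X := \begin{bmatrix} A & B \\ B^T & B^T A^{\dagger} B\end{bmatrix}$. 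By Lemma \ref{lem:complblock}, $X \succeq 0$ and $\rank(X) = \rank(A) \le r$, and $\PP(X) = a$ since we have only filled in unprescribed entries. Hence $a \in \psnr{r}$ with $r = n-k$... wait — the statement claims closedness of $\psnr{(n-k)}$, and $\rank(A) \le \rank(X^j) \le n-k$ automatically because $A$ is an $(n-k)\times(n-k)$ matrix, so no rank hypothesis on the sequence is even needed beyond PSD-ness; that is the point of the bound $n-k$.

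I would organize it as: (1) relabel so the independent set is the trailing block and record that its internal entries are free; (2) state the block form and note $\rank(A) \le n-k$ trivially; (3) take a convergent-projection sequence, pass to a limit on the prescribed $A$- and $B$-entries (and fill unprescribed $A$-entries by the loop/loopless dichotomy of Assumption \ref{assump:connected}), obtaining a partial PSD matrix $\begin{bmatrix} A & B \\ B^T & ?\end{bmatrix}$; (4) invoke Lemma \ref{lem:complblock} with $X^* = B^T A^{\dagger} B$ to get a PSD completion of rank $\rank(A) \le n-k$; (5) conclude $\PP(X) = a$, so $\psnr{(n-k)}$ contains its limit point and is closed.

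The main obstacle, and the place needing the most care, is step (3): showing that the prescribed data restricted to $V \setminus I$ converges to (the prescribed part of) a genuine PSD matrix $A$ with $\range(B)\subseteq\range(A)$, rather than merely to a partial PSD matrix whose completion might force higher rank. This is where Lemma \ref{lem:complblock}'s hypothesis ``$\range(B)\subseteq\range(A)$'' must be verified in the limit — it holds for each $X^j$ because $\begin{bmatrix} A^j & B^j \\ (B^j)^T & C^j\end{bmatrix}\succeq 0$, and one needs the range inclusion to be stable under the convergence, which follows once $A^j \to A$ and $B^j \to B$ along a subsequence (using boundedness of off-diagonal entries of a PSD matrix by its diagonal, exactly as in the $|\LL|=n$ argument). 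Everything else is bookkeeping.
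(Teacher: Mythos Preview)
Your sequential approach has a genuine gap at precisely the point you flag as ``the main obstacle.'' You want $A^j\to A$ and $B^j\to B$ along a subsequence, invoking ``boundedness of off-diagonal entries of a PSD matrix by its diagonal, exactly as in the $|\LL|=n$ argument.'' But the ambient assumption here is $|\LL|=0$: the diagonal of $A^j$ is \emph{not} prescribed, so there is no a priori bound on it and no reason for $A^j$ (or the unprescribed columns of $B^j$) to converge along any subsequence. Worse, the inclusion $\range(B^j)\subseteq\range(A^j)$ is not preserved under limits even when they exist: take $A^j=\diag(1,1/j)\to\diag(1,0)$ and $B^j=(0,1)^T$ constant; each $A^j$ is invertible so the inclusion holds, yet it fails in the limit. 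You then say you will ``fill unprescribed $A$-entries by the loop/loopless dichotomy,'' but never specify how, and that is exactly the missing idea.

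The paper's proof avoids the sequence entirely and shows something stronger: under Assumption~\ref{assump:connected} with $|\LL|=0$, \emph{every} $a\in\R^E$ admits a completion of rank at most $n-k$, so in fact $\psnr{(n-k)}=\R^E$ and closedness is trivial. The construction is one line: since all diagonal entries are free, choose the diagonal of the $(n-k)\times(n-k)$ block $A$ large enough to make $A$ positive definite and fill the free entries of $B$ arbitrarily; then $\range(B)\subseteq\range(A)=\R^{n-k}$ automatically, and Lemma~\ref{lem:complblock} produces a PSD completion of rank $\rank(A)\le n-k$. Once you see that the right move is to \emph{choose} the diagonal rather than inherit it from the sequence, the range condition becomes vacuous and your entire limiting scaffolding collapses to this direct argument.
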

\begin{proof}
An independent set of size $k$ means that we can apply Lemma
\ref{lem:complblock} with the free block $C$ of size $k$. 
More precisely, we can pick the diagonal elements of the $A$ and $C$
blocks large enough so that $Y_C\succeq 0$ exists. Then we can always find a
completion with rank at most $\rank(A)\leq n-k$.
\end{proof}

Of course, determining if a graph has an independent set of certain size
is generally a hard problem. However some nice corollaries follow.
The following was already given in Corollary \ref{cor:nm1}.
\begin{cor}
$\psnr{(n-1)}$ is closed.
\end{cor}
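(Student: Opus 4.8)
The plan is to obtain this as the $k=1$ specialization of the immediately preceding corollary. Every graph on $n\ge 2$ vertices contains an independent set of size $1$: any single vertex $v$ is such a set, the requirement ``no two vertices of the set are adjacent'' being vacuous for a one-element set. So first I would simply invoke that corollary with $k=1$, which gives that $\psnr{(n-1)}=\psnr{(n-k)}$ is closed.

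The one point worth a sentence of care is to confirm that nothing in the hypothesis or proof of that corollary secretly needs $k\ge 2$. The argument there applies Lemma \ref{lem:complblock} with a free block $C$ of size $k$; for $k=1$ this $C$ is just a scalar free diagonal entry, which is precisely the loopless vertex $v$ (recall that under Assumption \ref{assump:connected} we are in the case $|\LL|=0$, so $v$ carries no self-loop and its diagonal entry is unspecified). One picks the diagonal entries of the $A$-block and of the scalar $C$-block large enough that a positive semidefinite host matrix $Y_C$ exists, and Lemma \ref{lem:complblock} then produces a completion of rank $\rank(A)\le n-1$. Hence closedness follows via lower semi-continuity of the rank, exactly as in the $k$-independent-set case.

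Alternatively — and this is the route the surrounding text hints at with ``already given in Corollary \ref{cor:nm1}'' — I would note the statement is contained in Corollary \ref{cor:nm1}: the integer $t$ defined there satisfies $t\le n-1$, and since Assumption \ref{assump:disconn} (standing throughout) guarantees $\LL$ and $\LL^c$ are disconnected, that corollary yields closedness of $\psnr{r}$ for every $r\in\{t,t+1,\dots,n\}$, in particular $r=n-1$. Either derivation is immediate, so there is essentially no obstacle; the only thing to verify is the trivial fact that an independent set of size $1$ always exists and that the preceding corollary applies verbatim with $k=1$, which it does.
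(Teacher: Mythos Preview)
Your primary argument---take $k=1$ in the preceding independent-set corollary, since any single vertex is trivially independent---is exactly the paper's one-line proof. Your alternative via Corollary~\ref{cor:nm1} is also acknowledged by the paper (the sentence preceding the statement notes it was ``already given in Corollary~\ref{cor:nm1}''), so both routes match the paper's treatment.
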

\begin{proof}
Every vertex is by itself is an independent set.
\end{proof}
\begin{cor}
$\psnr{(n-2)}$ is not closed if and only if $G$ is the complete graph $K_n$.
\index{complete graph, $K_n$}
\index{$K_n$, complete graph}
\end{cor}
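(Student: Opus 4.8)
The plan is to prove the two directions separately, and for the "only if" direction to argue the contrapositive: if $G$ is not complete, then $\psnr{(n-2)}$ is closed. First I would handle this contrapositive using the independent-set corollary just proved. If $G=(V,E)$ (loopless, connected, by Assumption \ref{assump:connected}) is not the complete graph $K_n$, then there exist two distinct vertices $u,v$ with $uv\notin E$, so $\{u,v\}$ is an independent set of size $2$. The corollary preceding this statement then gives that $\psnr{(n-k)}=\psnr{(n-2)}$ is closed. This disposes of one direction with almost no work, since it is an immediate application of the independent-set result (which in turn rests on Lemma \ref{lem:complblock}).

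For the "if" direction I would show: if $G=K_n$, then $\psnr{(n-2)}$ is not closed. The natural approach mirrors the failure examples in Lemmas \ref{lem:triangle} and \ref{lem:path} and the remark about extending them with orthogonal vectors. One constructs a sequence $X^j\in\snr{n-2}$ whose projection onto the edges of $K_n$ (i.e.\ onto the entire off-diagonal, since $G=K_n$ is loopless) converges to a partial matrix admitting no PSD completion of rank $\le n-2$. Concretely, I would aim for a sequence built from $n-2$ rank-one pieces: pick vectors $v^j_1,\dots,v^j_{n-2}\in\R^n$ analogous to the triangle construction, e.g.\ with two "blow-up'' coordinates scaled like $\sqrt j$ and $1/\sqrt j$ arranged so that the specified off-diagonal entries converge while the limiting partial matrix forces the diagonal in a way incompatible with rank $n-2$. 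The cleanest route is likely: choose the $X^j$ so that in the limit the fully-specified off-diagonal part is that of a matrix whose only PSD completions (filling in the diagonal) have rank exactly $n-1$ or $n$ — for instance the off-diagonal of $\mathbf{1}\mathbf{1}^T$ minus a suitable low-rank correction — while each $X^j$ itself sits at rank $n-2$ by exploiting the extra freedom in the diagonal to kill one more rank at finite $j$. The lower semicontinuity of rank then gives the contradiction with closedness: the limit point lies outside $\psnr{(n-2)}$ but is a limit of points in it.

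The hard part will be pinning down the explicit sequence in the "if'' direction and verifying that the limiting partial matrix genuinely has no rank-$\le(n-2)$ PSD completion — the rank-one and path examples were easy because rank-one completability is very rigid, whereas at rank $n-2$ one must rule out all completions of a sizable family. I expect the right move is to reduce to a small ``core'' obstruction: embed one of the rank-one failure patterns (triangle on vertices $\{1,2,3\}$) into $K_n$ and pad the remaining $n-3$ coordinates with mutually orthogonal fixed directions, so that any completion must spend rank $1$ on each of those $n-3$ padding directions plus at least rank $2$ on the triangle block (since the triangle limit has no rank-one completion), forcing rank $\ge n-1$. Getting the off-diagonal entries of the padding to converge to prescribed nonzero values while keeping each $X^j$ at rank exactly $n-2$ — and checking the interaction between the padding block and the triangle block does not accidentally allow rank to drop — is the delicate bookkeeping step. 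Once that combinatorial/linear-algebra obstruction is established, the closedness failure follows immediately as in Corollary \ref{cor:discon}.
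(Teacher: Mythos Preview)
Your ``only if'' direction is exactly the paper's argument: the one-line proof in the paper is just the observation that any graph other than $K_n$ has an independent set of size $2$, so the preceding independent-set corollary applies.

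For the ``if'' direction your plan is correct but substantially more complicated than necessary. You propose building a sequence of matrices of rank \emph{exactly} $n-2$ by padding a triangle obstruction with $n-3$ orthogonal directions, and you rightly flag the bookkeeping of the interaction between the padding and the triangle block as the delicate step. But there is no need for any of that: since $\snr{1}\subseteq\snr{(n-2)}$ for $n\ge 3$, a rank-\emph{one} sequence already lives in $\snr{(n-2)}$. The paper (in the very next subsection, Lemma~\ref{lem:cliqueclosed} with $k=n$) takes $x^i=(\tfrac{1}{i},\ldots,\tfrac{1}{i},\,i)^T$ and $X^i=x^i(x^i)^T$. Under $G=K_n$ the off-diagonal converges to the matrix with zeros in the top $(n-1)\times(n-1)$ block and ones in the last row/column; any PSD completion must then have a diagonal top block of rank $n-1$, so no rank-$(n-2)$ completion exists. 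This sidesteps entirely the orthogonal-padding construction and the ``interaction'' worry you anticipated.

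So: keep your contrapositive for one direction, but for the other replace the rank-$(n-2)$ construction by the rank-one clique sequence.
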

\begin{proof}
The only graph without an independent set of size at least two is the
complete graph $K_n$.
\end{proof}

\subsection{Cliques}

\begin{lemma}
\label{lem:cliqueclosed}
Let $G$ have a clique of size $k>2$. Then $\psnr{(j-2)}, \, j=3,\ldots,k$,
is not closed.
\end{lemma}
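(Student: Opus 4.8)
The plan is to reduce to the triangle case already handled in Lemma~\ref{lem:triangle} by embedding the bad rank-one construction into a higher-rank construction using mutually orthogonal vectors, exactly as sketched in the Remark following Lemma~\ref{lem:path}. Concretely, suppose $G$ has a clique on vertices $\{1,2,\ldots,k\}$ with $k>2$, and fix $j$ with $3\le j\le k$. First I would single out $j$ of the clique vertices, say $\{1,2,\ldots,j\}$, noting that these induce a complete graph $K_j$, so in particular every edge $\ell m$ with $1\le \ell<m\le j$ belongs to $E$.

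Next I would build, for each integer $i\ge 1$, a matrix $X^i\in\snr{j-2}$ of the form $X^i=\sum_{t=1}^{j-2} v_t^i (v_t^i)^T$, where the vectors $v_1^i,\ldots,v_{j-2}^i$ are supported on coordinates $\{1,\ldots,j\}$ and are chosen pairwise orthogonal for every fixed $i$ (so that $\rank(X^i)\le j-2$). The idea is to mimic the triangle construction on a ``last'' triple of coordinates: one takes $v_t^i$ to agree with a $\pm$-pattern (à la the Remark) so that the Gram matrix $\PP(X^i)$ restricted to the clique edges has all off-diagonal entries among coordinates $\{j-2,j-1,j\}$ equal to a fixed constant (say $1$), the diagonal entries $X^i_{j-2,j-2}$ and $X^i_{j-1,j-1}$ equal to $1/i$, while $X^i_{jj}$ is allowed to grow like $i$, and all remaining specified entries (those involving coordinates $1,\ldots,j-3$, and the off-diagonal entries involving those coordinates) are $0$. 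Then $\PP(X^i)\to \bar a$, where $\bar a$ agrees with the limit of the triangle instance on the triple $\{j-2,j-1,j\}$ (namely diagonal $0,0$ and off-diagonals $1,1,?$) and is $0$ elsewhere.

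Finally I would argue that the limit partial matrix $\bar a$ has no PSD completion of rank at most $j-2$: any PSD completion would have a principal submatrix on $\{j-2,j-1,j\}$ of the form $\bigl[\begin{smallmatrix} 0 & 0 & 1\\ 0 & 0 & 1\\ 1 & 1 & ?\end{smallmatrix}\bigr]$, and a PSD matrix with a zero diagonal entry must have the corresponding row and column zero, contradicting the off-diagonal $1$'s --- so in fact $\bar a$ has no PSD completion at all, hence none of rank $\le j-2$. Thus $\psnr{(j-2)}$ is not closed for each $j=3,\ldots,k$.

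The main obstacle is the bookkeeping in the second step: one must exhibit explicit pairwise-orthogonal vectors $v_1^i,\ldots,v_{j-2}^i$ on $j$ coordinates whose Gram matrix realizes exactly the prescribed entries on the clique edges while keeping all the ``nuisance'' specified entries zero. The Remark after Lemma~\ref{lem:path} indicates the right gadget: use a Hadamard-type $\pm 1$ sign pattern so that the cross terms $\langle v_s^i, v_t^i\rangle$ vanish for $s\ne t$ yet $\sum_t (v_t^i)_\ell (v_t^i)_m$ collapses to the desired single value on the relevant coordinate pairs. Once the vectors are written down, orthogonality and the entrywise identities are a direct computation, and the convergence $\PP(X^i)\to\bar a$ together with the non-completability argument closes the proof.
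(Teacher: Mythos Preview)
Your proposal has a genuine gap in the final non-completability step. You assert that any PSD completion of the limit $\bar a$ has principal $\{j-2,j-1,j\}$-submatrix $\bigl[\begin{smallmatrix} 0 & 0 & 1\\ 0 & 0 & 1\\ 1 & 1 & ?\end{smallmatrix}\bigr]$ and then invoke ``zero diagonal entry forces zero row.'' But under Assumption~\ref{assump:connected} we are in the loopless case $|\LL|=0$, so diagonal entries are \emph{free}, not prescribed; the principal submatrix a completion must match is $\bigl[\begin{smallmatrix} ? & 0 & 1\\ 0 & ? & 1\\ 1 & 1 & ?\end{smallmatrix}\bigr]$, which of course admits PSD completions. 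More fatally, your strategy of concentrating the obstruction in a single triangle while setting all remaining specified clique entries to $0$ cannot work: in the limit one may put $0$ on the first $j-3$ diagonal positions and complete the last $3\times 3$ block to the rank-two matrix $\bigl[\begin{smallmatrix} 1 & 0 & 1\\ 0 & 1 & 1\\ 1 & 1 & 2\end{smallmatrix}\bigr]$, producing a rank-two PSD completion of $\bar a$. Hence for every $j\ge 4$ your limit lies in $\psnr{2}\subseteq\psnr{(j-2)}$ and nothing is shown. (The Remark you cite concerns \emph{cycles}, where the extra vertices carry nontrivial prescribed edges; in your clique setting those extra vertices are joined to the triangle only by prescribed $0$'s, which is why they can be collapsed.)

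The paper's argument avoids this by using a single \emph{rank-one} sequence on the full $k$-clique rather than a rank-$(j-2)$ gadget on a sub-clique. With $x^i=(\tfrac{1}{i},\ldots,\tfrac{1}{i},i,0,\ldots,0)^T$ (the first $k-1$ entries equal to $1/i$, the $k$-th equal to $i$) and $X^i=x^i(x^i)^T\in\snr{1}\subseteq\snr{(j-2)}$, the projected off-diagonals on the clique converge to $0$ among $\{1,\ldots,k-1\}$ and to $1$ on every edge $(\ell,k)$ with $\ell<k$. Any PSD completion of this limit then has its leading $(k-1)\times(k-1)$ block diagonal; none of those diagonal entries can vanish (else the corresponding row would be zero, contradicting the $1$ in column $k$), so that block already has rank $k-1$ and every completion has rank at least $k-1$. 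One construction thus handles all $j=3,\ldots,k$ simultaneously, with no orthogonal-vector bookkeeping needed.
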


\begin{proof}
Let $\{1,2,\ldots k\}$ be the clique. Let $x^i\in \R^n$ be a 
sequence of vectors defined by:

\beqs
x^i_j = \left\{
	\begin{array}{lcl}
		\frac{1}{i}, & \text{if } j<k\\
		i, & \text{if } j=k\\
		0, & \text{if } j>k.
	\end{array}
\right.
\eeqs
Define the rank one sequence of PSD matrices 
\[
X^i = x^i{x^i}^T= \begin{bmatrix}
\frac 1{i^2}J & e & 0\cr
e^T           & i^2 & 0\cr
0              & 0 & 0
\end{bmatrix}
\to \begin{bmatrix}
0 & e & 0\cr
e^T           & \infty & 0\cr
0              & 0 & 0
\end{bmatrix},
\]
where $e$ is the vector of ones and $J$ is the matrix of ones.
Then $a^i := \PP(X^i) \in \psnr{1}$. But, as $a^i \rightarrow \bar{a}$ 
\index{$e$, vector of ones}
\index{vector of ones, $e$}
\index{$J$, matrix of ones}
\index{matrix of ones, $J$}
we have that $\bar{a}$ has a $(k-1) \times (k-1)$ submatrix with all $0$ 
off-diagonal entries, and free diagonal entries. 
Moreover the $(j,k)$ and $(k,j)$ entries are one, non-zero, for $j<k$.
Since the diagonal is free, we see that there is a rank $k-1$ completion
but no smaller rank completion, i.e.,~$\bar{a} \notin \psnr{(k-2)}$.
\end{proof}
\begin{theorem}
Let $G$ have disjoint cliques $C_i$ with cardinalities $k_i$ and
integers $j_i$ satisfying $|C_i|=k_i\geq
j_i>2, \, i  =1,\ldots,t$. Let $\II \subseteq \{1,\ldots,t\}$, and
$t= \sum_{i\in \II}(j_i-2)$.  Then
\[
\psnr{t} \text{   is not closed}.
\]
\end{theorem}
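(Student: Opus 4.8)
The plan is to run the construction from the proof of Lemma~\ref{lem:cliqueclosed} on several cliques simultaneously and superpose the resulting rank-one sequences.

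\textbf{Setup and sequence.} For each $i\in\II$ choose inside $C_i$ a sub-clique on $j_i$ vertices and single out in it $j_i-1$ ``low'' vertices $L_i=\{w^{(i)}_1,\dots,w^{(i)}_{j_i-1}\}$ together with one ``high'' vertex $u_i$; since the $C_i$ are pairwise disjoint, so are the sets $L_i\cup\{u_i\}$, and by Assumption~\ref{assump:connected} the graph is loopless so no diagonal index lies in $E$. On the indices $L_i\cup\{u_i\}$ place the rank-one matrix $X^m_i=x^m_i(x^m_i)^T$ with $(x^m_i)_w=1/m$ for $w\in L_i$ and $(x^m_i)_{u_i}=m$, exactly as in Lemma~\ref{lem:cliqueclosed}, and let $X^m\in\Sn$ be the matrix that equals $X^m_i$ on $L_i\cup\{u_i\}$ for each $i\in\II$ and is $0$ everywhere else. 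Then $X^m\succeq0$ and $\rank(X^m)=|\II|\le\sum_{i\in\II}(j_i-2)=t$ because each $j_i-2\ge1$, so $a^m:=\PP(X^m)\in\psnr{t}$. The only unbounded entries of $X^m$ are the diagonal entries $(X^m)_{u_iu_i}=m^2$, which are not in $E$; hence $a^m$ converges to some $\bar a\in\R^E$ with $\bar a_{vw}=0$ for distinct $v,w\in L_i$, $\bar a_{wu_i}=1$ for $w\in L_i$, and $\bar a=0$ on every remaining edge (in particular on every edge between two distinct cliques and on every edge meeting a vertex outside $\bigcup_{i\in\II}(L_i\cup\{u_i\})$).

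\textbf{The limit is not rank-$t$ completable.} (That $\bar a$ really is a shadow of some PSD matrix follows from a block-diagonal completion obtained by iterating Proposition~\ref{prop:k2}, so the failure is genuine and not an artifact of divergence.) Suppose $X\succeq0$ with $\PP(X)=\bar a$. Fix $i\in\II$: for $w\in L_i$ we have $X_{wu_i}=1\ne0$, so the $2\times2$ principal minor on $\{w,u_i\}$ forces $X_{ww}>0$, and since $X_{vw}=0$ for distinct $v,w\in L_i$ the principal submatrix $X[L_i]$ is a positive diagonal matrix; in particular it is invertible, so the $j_i-1$ rows of $X$ indexed by $L_i$ are linearly independent. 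Assembling over $i\in\II$, the principal submatrix $X[L]$ on $L:=\bigcup_{i\in\II}L_i$ has positive-diagonal diagonal blocks and vanishes at every off-diagonal position that corresponds to an edge of $G$; one then argues that $X[L]$ is nonsingular, whence the $\sum_{i\in\II}(j_i-1)=t+|\II|$ rows of $X$ indexed by $L$ are linearly independent and $\rank(X)\ge t+|\II|>t$, contradicting $\rank(X)\le t$. Hence $\bar a\notin\psnr{t}$ and $\psnr{t}$ is not closed.

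\textbf{The main obstacle.} Everything except the nonsingularity of $X[L]$ is a bookkeeping extension of Lemma~\ref{lem:cliqueclosed}. The difficulty is that the diagonal blocks $X[L_i]$ are forced (to be positive diagonal), but the off-diagonal blocks $X[L_i,L_{i'}]$ with $i\ne i'$ are only controlled at positions that happen to be edges of $G$; at non-edges they are free, so a bare ``direct sum'' of the single-clique arguments does not close the gap. Handling this requires using the edge structure linking the chosen sub-cliques inside the connected graph $G$, together with the Cauchy--Schwarz inequalities $X_{vw}^2\le X_{vv}X_{ww}$ played against the columns $u_i$, to pin those blocks down; this is exactly where the hypotheses $k_i\ge j_i$ and the way the cliques sit inside $G$ enter.
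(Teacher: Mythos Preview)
Your construction is the natural block-diagonal extension of Lemma~\ref{lem:cliqueclosed}, and the paper's own argument is in fact nothing more than the single sentence ``a completion can lose rank $j_i-2$ for each clique,'' so you are not missing a slick trick that the authors have and you do not. The trouble is the step you yourself flag as ``the main obstacle'': the nonsingularity of $X[L]$ does \emph{not} follow from the data you have forced, and the Cauchy--Schwarz bounds against the columns $u_i$ do not rescue it. Concretely, take $|\II|=2$ with $j_1=j_2=3$, so $t=2$, $L_1=\{1,2\}$, $u_1=3$, $L_2=\{4,5\}$, $u_2=6$, and suppose the only edges of $G$ outside the two triangles are $37$ and $67$ (so $G$ is connected, as Assumption~\ref{assump:connected} requires). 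Your limit $\bar a$ has $\bar a_{12}=\bar a_{45}=0$, $\bar a_{13}=\bar a_{23}=\bar a_{46}=\bar a_{56}=1$, and $\bar a_{37}=\bar a_{67}=0$. Now set
\[
P=\begin{bmatrix}1&0\\0&1\\1&1\\1&0\\0&1\\1&1\\0&0\end{bmatrix},\qquad X=PP^{T}\in\snr{2},
\]
and check that $\PP(X)=\bar a$. Thus $\bar a\in\psnr{2}$ and your sequence does not witness non-closedness; in particular $X[L]=\left[\begin{smallmatrix}1&0&1&0\\0&1&0&1\\1&0&1&0\\0&1&0&1\end{smallmatrix}\right]$ is singular, exactly the failure mode you anticipated.

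The moral is that disjointness of the $C_i$ alone gives you no control over the cross-blocks $X[L_i,L_{i'}]$, because the corresponding positions need not be edges and hence are genuinely free in any completion; without further hypotheses linking the cliques (e.g.\ edges between every $L_i$ and $L_{i'}$, which would force those blocks to vanish), the rank lower bound $t+|\II|$ simply does not hold. So the gap you identified is real, and neither your sketch of how to close it nor the paper's one-line appeal to the lemma actually does so for arbitrary connected $G$.
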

\begin{proof}
From Lemma \ref{lem:cliqueclosed} we have that a completion can lose
rank $j_i-2$ for each clique.
\end{proof}

\section{Conclusion}
\label{sect:concl}
In this paper we have studied the problem when the rank restricted
coordinate shadows $\psnr{r}$ are closed.

\subsection{Summary of Closure Conditions}
\begin{enumerate}
\item 
$\psnr{0}$ is trivially closed.
\item 
$\PP(\Snp)$ is closed if, and only if, 
$\LL$ and $\LL^c$ are \underline{disconnected}.
\item 
$\LL$ and $\LL^c$ are \underline{connected} implies
$\psnr{r}$ is \underline{not} closed for all $r$.
\item
$\PP(\Snrp)$ is closed if, and only if, the restricted
projections $\PP_{H_i}(\Snirpc)$ are closed for all
\underline{connected} components
$H_i$ of $G$, $n_i=|H_i|$.
\item We can now assume that $\LL$ and $\LL^c$ are \underline{not connected}.
\begin{enumerate}
\item
$\psnr{r}$ \underline{is} closed for all $r$ such that
\[
\min \left\{n-1, \left\lceil {-\frac 32 + 
            \frac {\sqrt {9 + 8|E|}}2} \right\rceil\right\}
\leq r \leq n.
\]
\item
$|\LL|=n$, a loop graph, implies that
$\psnr{r}$ \underline{is}  closed for all $r$.
\item
We can now assume that $G$ is connected and $|\LL|=0$, a loopless graph.
\begin{enumerate}
\item
for complete bipartite:
\begin{enumerate}
\item
If $G$ is complete bipartite, then $\psnr{r}$ \underline{is} closed for all $r$.
\item
$G$ is complete bipartite if, and only if, $\psnr{1}$ \underline{is} closed.
\end{enumerate}
\item
for independent set:
\begin{enumerate}
\item
If $G$ has an independent set of size $k$, then 
$\psnr{n-k}$ \underline{is} closed.
\item
$\psnr{(n-1)}$ \underline{is} closed.
\item
$\psnr{(n-2)}$ is \underline{not} closed if and only if $G$ is $K_n$.
\end{enumerate}
\item
for clique:
\begin{enumerate}
\item
If $G$ has a clique of size $k>2$, then $\psnr{(k-2)}$ is \underline{not} 
closed. (And extensions to more disjoint cliques.)
\end{enumerate}
\end{enumerate}

\end{enumerate}
\end{enumerate}
\subsection{Open Questions}
We saw that complete bipartite characterized closure for rank one and
was sufficient for all $r$. A reasonable conjecture is that for non-bipartite
graphs, we get that tripartite characterizes
closure for rank $2$ and is sufficient for $r\geq 3$. 
This naturally leads to the corresponding conjecture for higher ranks
and higher multipartite graphs. Note that a simple proof for sufficiency
for the tripartite case follows if the matrices $A,B,C$ in the partial
symmetric matrix
$\begin{bmatrix}
? & A & B\\
A^T & ? & C\\
B^T & C^T & ?\\
\end{bmatrix}$
are all rank $2$ and all $2\times 2$. We could then explicitly solve for
$P,Q,R$ in the three equations $A=PQ^T, B=PR^T, C=QR^T$ to obtain the
rank $2$ PSD completion
$\begin{bmatrix}
P\cr
Q \cr
R
\end{bmatrix}$
$\begin{bmatrix}
P\cr
Q \cr
R
\end{bmatrix}^T$.

In Remark \ref{cor:rankonenuclnorm} we have emphasized that there are
instances where the nuclear norm fails to recover the data matrix $Z$ of
the correct rank. This leads to questions about the measure of the sets
where failure occurs and is related to the conditions on sampling
for high probability completions, see \cite{MR2565240}.

%\cleardoublepage
\addcontentsline{toc}{section}{Index}
%\label{ind:index}
\printindex

\addcontentsline{toc}{section}{Bibliography}
\bibliography{.master,.edm,.psd,.bjorBOOK}

\end{document}